\documentclass{article}
\usepackage[english]{babel}
\usepackage{amsmath}
\usepackage{amssymb}
\usepackage{amsthm}
\newtheorem{prop}{Proposition}
\newtheorem{theorem}{Theorem}

\newtheorem{definition}{Definition}[section]
\newtheorem{remark}{Remark}

\usepackage{tikz-cd}
\usepackage{xcolor}
\usepackage[sorting=none, backend=biber]{biblatex}

\usepackage[en-US]{datetime2}

\usepackage[autostyle=true]{csquotes} 
\usepackage{graphicx} 
\usepackage{amsfonts}

\title{Almost Trivial Units in Group Rings}
\date{}
\date{Efrat Shani, Anupam Srivastav}
\begin{document}
\maketitle
\footnote{This paper is partly based on Shani's PhD Thesis \cite{shani2024trivial} }
Abstract: 

We find all ring of integers, $R$, and finite groups $G$, such that $RG$ has almost trivial units. Moreover, for such a group ring we determine the Swan subgroup of its locally free classgroup. 

\section{Introduction}
Throughout this, let $|G|=n>1$ be a finite group. Let $R=\mathcal{O}_K$ be the ring of integers of $K$, a number field. Let $(\Sigma)$ be an ideal in the group ring $RG$ generated by $\sum_{g\in G} g$. Denote $\eta_0=\frac{\sum_{g\in G}g}{n}$, $\eta'=1-\eta_0$. Those are orthogonal idemponets in the group algebra $KG$. That lets us write $KG=KG\eta_0\oplus KG\eta'$. Because $g\eta_0=\eta_0$ for all $g\in G$, $KG\eta_0=K\eta_0$, we have $KG=K\eta_0\oplus KG\eta'$. As a subalgebra, we have $R\eta_0+R\eta'\leq K\eta_0+K\eta'$. Let $u,v\in U(R)$ so $u\eta_0+v\eta'$ is a unit in $R\eta_0+R\eta'$. ($U(R\eta_0+R\eta')=U(R)\eta_0+U(R)\eta'=\{u\eta_0+v\eta' | u,v\in U(R)\}$). 
\begin{prop}
Let $u,v\in U(R)$. If $u\equiv v$ mod $n$, then $u\eta_0+v\eta'$ is a unit in $RG$. 
\end{prop}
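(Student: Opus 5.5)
The plan is to show two things: that $u\eta_0+v\eta'$ actually lies in $RG$, and that its inverse in $KG$ also lies in $RG$.

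\textbf{Membership in $RG$.} Expanding with $\eta'=1-\eta_0$ gives
\[
u\eta_0+v\eta' \;=\; v+(u-v)\eta_0 \;=\; v+\frac{u-v}{n}\,\Sigma ,
\]
where $\Sigma=\sum_{g\in G} g$. The hypothesis $u\equiv v \pmod n$ says precisely that $(u-v)/n\in R$. Since $v\in R$ and $\Sigma\in RG$, the element lies in $RG$.

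\textbf{The inverse.} In $KG=K\eta_0\oplus KG\eta'$, the idempotents $\eta_0,\eta'$ are orthogonal and sum to $1$. So the inverse of $u\eta_0+v\eta'$ in $KG$ is $u^{-1}\eta_0+v^{-1}\eta'$; one checks this by multiplying out, using $\eta_0^2=\eta_0$, $\eta'^2=\eta'$ and $\eta_0\eta'=0$. The elements $u^{-1},v^{-1}$ lie in $U(R)\subseteq R$, and
\[
u^{-1}-v^{-1} \;=\; -\,u^{-1}v^{-1}(u-v) \;\in\; nR .
\]
Hence $u^{-1}\equiv v^{-1}\pmod n$, and the membership argument above applies verbatim to show $u^{-1}\eta_0+v^{-1}\eta'\in RG$.

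Therefore $u\eta_0+v\eta'$ has an inverse in $RG$, so it is a unit of $RG$. There is no real obstacle here. The only point needing care is noticing that the congruence is inherited by the inverses, and the identity $u^{-1}-v^{-1}=-u^{-1}v^{-1}(u-v)$ gives this immediately because $u,v$ are units of $R$.
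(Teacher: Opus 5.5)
Your proof is correct and follows the paper's argument: you rewrite $u\eta_0+v\eta'$ as $v+\frac{u-v}{n}\Sigma$ to get membership in $RG$, then exhibit the inverse $u^{-1}\eta_0+v^{-1}\eta'$ using the orthogonal idempotents. You also do something the paper leaves implicit: the paper only checks that this inverse works in $KG$, and your identity $u^{-1}-v^{-1}=-u^{-1}v^{-1}(u-v)\in nR$ is exactly what shows the inverse actually lies in $RG$.
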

\begin{proof}
    $u\eta_0+v\eta'=u\eta_0+v(1-\eta_0)=v+(u-v)\eta_0=v+\frac{u-v}{n}\Sigma$. It is in $RG$, as $u\equiv v$ mod $n$. Moreover, $(u^{-1}\eta_0+v^{-1}\eta')(u\eta_0+v\eta')=\eta_0+\eta_1=1$, by property of orthogonal idemponets.
\end{proof} 
 Define $U(RG)$ as the units of $RG$. $TU(RG)=\{ug | u\in U(R), g\in G\}$. as the Trivial units of $RG$. $ATU(RG)=\{(u\eta_0+v\eta')g | u\equiv v$ mod $nR$, $u,v\in U(R)$\} as Almost Trivial Units of $RG$. \\
Also note, $TU(RG)\leq ATU(RG)\leq U(RG)$ as groups. 
\\
We say $RG$ has only trivial units if $U(RG)=TU(RG)$ and $RG$ has only almost trivial units is if $U(RG)=ATU(RG)$. 
\begin{prop}
     $\alpha\in U(RG)$ is in $ATU(RG)$ if and only if $\alpha=vg+k\Sigma$ for some $v\in U(R), g\in G, k\in R$ and $\Sigma=\sum_gg$. 
\end{prop}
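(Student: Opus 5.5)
We prove the two directions separately. Both rest on the identity from the proof of the first Proposition, $u\eta_0+v\eta' = v+\frac{u-v}{n}\Sigma$, together with the facts $g\Sigma=\Sigma g=\Sigma$ and $g\eta_0=\eta_0$ for all $g\in G$.

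For the forward direction, let $\alpha\in ATU(RG)$. Then $\alpha=(u\eta_0+v\eta')g$ with $u,v\in U(R)$ and $u\equiv v$ mod $nR$. Expanding gives
\[
\alpha=\Bigl(v+\tfrac{u-v}{n}\Sigma\Bigr)g = vg+\tfrac{u-v}{n}\Sigma .
\]
Set $k=\frac{u-v}{n}$; this lies in $R$ because $u\equiv v$ mod $nR$. So $\alpha=vg+k\Sigma$ has the required form.

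For the converse, suppose $\alpha=vg+k\Sigma$ is a unit of $RG$ with $v\in U(R)$, $g\in G$, $k\in R$. Define $u=v+nk\in R$, so that $u\equiv v$ mod $nR$. We check that
\[
(u\eta_0+v\eta')g = vg+\tfrac{u-v}{n}\Sigma g = vg+k\Sigma = \alpha .
\]
It remains to show that $u\in U(R)$, which is the only step that needs an argument.

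To get this, apply the augmentation map $\varepsilon: RG\to R$, the ring homomorphism with $\varepsilon(g)=1$ for all $g\in G$. It sends units to units, and $\varepsilon(\Sigma)=n$. Hence $\varepsilon(\alpha)=v+kn=u$ is a unit of $R$.

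Alternatively, one can multiply by $\eta_0$: $\alpha\eta_0=(v+kn)\eta_0=u\eta_0$. Since $\alpha$ is invertible in $KG$, the element $u\eta_0$ is invertible in $K\eta_0$, so $u\neq 0$. Then $u^{-1}\eta_0=\alpha^{-1}\eta_0$. However, this route only shows $u^{-1}\in K$, and integrality of $u^{-1}$ would need a separate argument. The augmentation argument gives $u\in U(R)$ directly, so it is the one to use.

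With $u,v\in U(R)$ and $u\equiv v$ mod $nR$, we conclude $\alpha\in ATU(RG)$, which completes the proof.
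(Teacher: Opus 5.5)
Your proof is correct, and its skeleton matches the paper's. The same identity $u\eta_0+v\eta'=v+\frac{u-v}{n}\Sigma$ gives the forward direction with $k=\frac{u-v}{n}$, and the converse with $u=v+nk$. The only real difference is the step showing $u\in U(R)$.

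The paper stays inside the decomposition $KG=K\eta_0\oplus KG\eta'$. It writes $\alpha=vg\eta'+u\eta_0$, sets $\alpha^{-1}=(vg)^{-1}\eta'+x\eta_0$ ``for some $x\in R$'', and reads off $ux=1$ from $\eta'+ux\eta_0=1$. That form of $\alpha^{-1}$ is fine in $KG$, with $x=u^{-1}$. However, the claim $x\in R$ is asserted without justification, which is exactly the integrality issue you flag in your alternative $\eta_0$ route.

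Your augmentation argument closes this in one line. Since $\varepsilon\colon RG\to R$ is a unital ring homomorphism, $\varepsilon(\alpha)=v+kn=u$ is a unit. Equivalently, the paper's $x$ equals $\varepsilon(\alpha^{-1})\in R$, because $\varepsilon(\eta_0)=1$ and $\varepsilon(\eta')=0$. So your version is the paper's argument with its one non-formal step made rigorous, and shorter.
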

\begin{proof}
Assume $\alpha\in U(RG)$ is in $ATU(RG)$ then $\alpha=(u\eta_0+v\eta')g=u\eta_0+vg\eta'=u\eta_0+vg(1-\eta_0)=vg+(u-vg)\eta_0=vg+(u-v)\eta_0$. Since $u=v$ mod $n$, we can let $k=\frac{u-v}{n}$ in $R$. So we have $\alpha=vg+k\Sigma$. \\ Now, if $\alpha=vg+k\Sigma=vg+nk\eta_0=vg(1-\eta_0)+nk\eta_0+v\eta_0=vg\eta'+(nk+v)\eta_0$. Take, $u=nk+v$. Which means $u=v$ mod $n$. Now since $\alpha$ is a unit there is an $\alpha^{-1}$ such that $(vg\eta'+(nk+v)\eta_0)\alpha^{-1}=1$, for some $x\in R$, we let $\alpha^{-1}=(vg)^{-1}\eta'+x\eta_0$. This results in $\eta'+ux\eta_0=1$. As $\eta'=1-\eta_0$ is forces $ux=1$ so $u$ must be a unit.  
\end{proof} 

    That leads us to the following definition: 
\begin{definition} [Reduced (Truncated) Group Rings]
    We defined the reduced (truncated) group ring as the quotient group ring, $RG_t:=RG\big/_{(\Sigma)}$. Where $(\Sigma)=(\Sigma)RG$ is a principal ideal of $RG$ 
\end{definition}

In the $K$-algebra $KG_t=\frac{K\eta_0\oplus KG\eta'}{\Sigma}\cong KG\eta'$. \\The trivial units of $RG_t$ are defined as $TU(RG_t)=\{ug+(\Sigma) | u\in U(R), g\in G\}$.

We can now conclude, if $RG_t$ has only trivial units so $U(RG_t)=TU(RG_t)$, $RG$ has only almost trivial units, $U(RG)=ATU(RG)$.  
\\
The motivation for defining Reduced (Truncated) Group Rings, comes from the following Milnor's Square, with classically  $R=\mathbb Z$:

\[
  \begin{tikzcd}
    RG \arrow{r}{} \arrow{d}{\epsilon} & RG_t \arrow{d}{\bar{\epsilon}} \\
    R \arrow{r}{\pi}  & \bar{R}  \\
  \end{tikzcd}
\]
Where $\bar{R}$ is $\frac{R}{nR}$, $n=|G|$. $\bar{\epsilon}$ is induced by the augmention map $\epsilon: \sum r_gg\rightarrow \sum r_g$. $\bar{\epsilon},\pi$ are surjective. \\
There is an exact Mayer-Vietoris sequence. $$U(RG)\rightarrow U(RG_t) \times U(R) \xrightarrow{h}  U(\bar{R}) \xrightarrow{\delta} K_0(RG)\rightarrow K_0(RG_t)\oplus K_0(R)\rightarrow 0.$$ Where $K_0(RG)$ is the Grothendieck group. It it known by, \cite{greither1999swan}, \cite{taylor1984classgroups} that $$h(U(RG_t) \times U(R))=\bar{\epsilon}(U(RG_t))\times \pi(U(R)).$$ $\delta(\bar{r})$ is the class of the Swan module $\langle r,\sum\rangle$, in $K_0(RG)$. 

\begin{definition} [Swan Subgroups] 
  $\delta(U(\bar{R}))=T(RG)$, the Swan subgroup. Using the properties of exact sequence, $T(RG)=\frac{U(\bar{R})}{h(U(RG_t) \times U(R))}$.
\end{definition}
From \cite{taylor1984classgroups} Taylor's theorem 2.5, we see that if $G$ is an elementary abelian $p$-group $C_{p^n}$ where $p$ is a prime number then the order of the Swan subgroup is $p^{n-1}$ for $p\neq 2$. And $2^{n-2}$ when $p=2$. \\ 
If $RG_t$ has only trivial units, the Swan subgroup is $T(RG)=\frac{U(\bar{R})}{\pi(U(R))}$. \\ For example, it was calculated first in Rich's thesis \cite{rich2020units} that the Swan subgroup of $\mathbb Z (C_3^n\times C_2^m)$ for $n,m\geq1$ is a group of order $(3^{n-1})(2^{m-1})$. 

\textbf{Dirichlet's Unit Theorem}
$U(R)$ is a finitely generated abelian group of a finite $\mathbb Z$-rank. We denote the rank as, $rk(U(R))$ and it is $r_1+r_2-1$  where  $r_1$ is the number of real embedding from $K$ to $\mathbb Q$, and $r_2$ is the number of imaginary embedding. It is known $[K:\mathbb Q]=r_1+2r_2$. 
\\

    The rank of trivial units of $RG$ ($RG_t$), is the same as the rank of $U(R)$ since $G$ is a finite group.  
\\
Therefore, $TU(RG_t), TU(RG)$ are also finitely generated abelian group of a $\mathbb Z$-rank, $rk(TU(RG_t))=rk(TU(RG))=r$. While, $rk(ATU(RG))=2r$. 

\begin{theorem} [Higman]
$\mathbb Z G$ has only trivial units if and only if,\\  $G\cong C_3^m\times C_2^n, C_4^m\times C_2^n, Q_8\times C_2^n$ where $n,m\geq 0$.  \cite{higman1940units}
\end{theorem}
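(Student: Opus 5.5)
We plan to prove the two directions separately, with the involution $*:\sum a_g g\mapsto \sum a_g g^{-1}$ and the trace $\mathrm{tr}(\sum a_g g)=a_1$ as the main tools.

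\emph{Sufficiency.} Let $G$ be one of the listed groups. We decompose $\mathbb Q G$ by Wedderburn. For $G$ abelian of exponent dividing $4$ or $6$, every simple component is $\mathbb Q$, $\mathbb Q(i)$ or $\mathbb Q(\zeta_3)$. For $G=Q_8\times C_2^n$, the components are copies of $\mathbb Q$ and copies of the rational Hamilton quaternions $\mathbb H_{\mathbb Q}$; the latter appear because $\mathbb Q Q_8\cong \mathbb Q^4\oplus \mathbb H_{\mathbb Q}$, and tensoring with $\mathbb Q C_2^n\cong\mathbb Q^{2^n}$ just repeats the components. Now $\mathbb Z G$ embeds in a maximal order $\Lambda$ of $\mathbb Q G$, and $\Lambda$ is a product of maximal orders of these components. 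Each such order has a finite unit group: $\mathbb Z$, $\mathbb Z[i]$ and $\mathbb Z[\zeta_3]$ have finitely many units, and the Hurwitz order has $24$ units. So $U(\mathbb Z G)$ is finite, and every unit $u$ has finite order.

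We then show that such a $u$ is trivial. In each component $*$ induces the positive involution, namely complex conjugation or quaternion conjugation. For a unit of finite order this conjugate is the inverse: a root of unity in $\mathbb Q(i)$ or $\mathbb Q(\zeta_3)$ has conjugate equal to its inverse, and a Hurwitz unit has norm $1$. Hence $uu^*=1$. Comparing coefficients of $1$ in $uu^*=1$ gives $\sum_g a_g^2=1$ for $u=\sum a_g g$. Therefore $u=\pm g$.

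\emph{Necessity.} Assume $\mathbb Z G$ has only trivial units. The first step forces every subgroup to be normal. If $\langle h\rangle$ is not normal, choose $g$ with $g^{-1}hg\notin\langle h\rangle$. Put $\hat h=\sum_{i}h^i$ and $x=(1-h)g\hat h$. Then $\hat h(1-h)=0$ gives $x^2=0$, and $x\neq 0$ because $g\hat h\neq hg\hat h$. So $1+x$ is a unit which is not of the form $\pm g$, a contradiction.

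Hence $G$ is a Dedekind group: either abelian, or $Q_8\times C_2^a\times A$ with $A$ abelian of odd order. Next we bound element orders. If $G$ contains an element of order $k$, then $\mathbb Z C_k\subseteq \mathbb Z G$. So it suffices to exhibit a nontrivial unit in $\mathbb Z C_k$ whenever $k\nmid 4$ and $k\nmid 6$. Any such $k$ is divisible by a prime $p\ge5$, by $8$, by $9$, or by $12$. For each of these minimal cases we want a nontrivial unit, and we would try Bass cyclic units first, e.g.
\[
u=(1+g+\dots+g^{i-1})^{\varphi(k)}+\tfrac{1-i^{\varphi(k)}}{k}\textstyle\sum_j g^j .
\]
Alternatively one can use explicit units such as $1-g-g^{4}$ in $\mathbb Z C_5$, then check that they are not $\pm$ a group element.

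Granting this, an abelian $G$ has exponent dividing $4$ or $6$, so $G\cong C_4^m\times C_2^n$ or $C_3^m\times C_2^n$. In the nonabelian case $A$ must be trivial, since otherwise $Q_8\times C_3$ contains an element of order $12$. This gives $G\cong Q_8\times C_2^n$.

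The main obstacle is the cyclic step: producing and verifying a nontrivial unit in $\mathbb Z C_k$ for the minimal bad orders $k=p\ge5$, $8$, $9$, $12$. A conceptual fallback is Dirichlet's Unit Theorem, as recalled above. The image of $\mathbb Z C_k$ in $\mathbb Z[\zeta_k]$ has finite index in the maximal order of $\mathbb Q C_k$. So $U(\mathbb Z C_k)$ has finite index in the units of that order, and its rank is positive as soon as $\varphi(k)>2$. Therefore $U(\mathbb Z C_k)$ is infinite and cannot consist of the finitely many trivial units.
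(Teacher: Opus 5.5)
The paper gives no proof of this statement. It is quoted from Higman's 1940 paper and used as a black box, notably in Proposition 3 in the form ``$rk(U(\mathbb ZG))=0$ if and only if $G$ is a Higman group''. Your outline is the classical self-contained proof, and it is correct.

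\emph{Sufficiency} is sound. The unit groups of $\mathbb Z$, $\mathbb Z[i]$, $\mathbb Z[\zeta_3]$ and the Hurwitz order are finite, and then $uu^*=1$ forces $\sum a_g^2=1$. The involution preserves every component because $\chi(g^{-1})=\overline{\chi(g)}$ for each character and $\rho(g^{-1})=\overline{\rho(g)}$ for the quaternion representations.

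\emph{Necessity} is also sound: the nilpotent $x=(1-h)g\hat h$, Dedekind's classification, and the element-order bound all work. Two spots need one more line each:
\begin{itemize}
\item Justify that $1+x$ is nontrivial. Since $(1+x)^k=1+kx\neq 1$, it has infinite order, unlike $\pm g$.
\item The cyclic step. The Bass unit and $1-g-g^4$ are left unverified. Your Dirichlet fallback already settles every $k\notin\{1,2,3,4,6\}$ at once, so the reduction to $k\in\{p,8,9,12\}$ is unnecessary. Phrase it correctly, though. It is $\mathbb ZC_k$ itself that has finite index $m$ in $\Lambda=\prod_{d\mid k}\mathbb Z[\zeta_d]$, not its image in $\mathbb Z[\zeta_k]$, which is all of $\mathbb Z[\zeta_k]$. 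Then $1+m\Lambda\subseteq\mathbb ZC_k$. So the kernel of $U(\Lambda)\to U(\Lambda/m\Lambda)$ is a finite-index subgroup of $U(\Lambda)$ lying in $U(\mathbb ZC_k)$, since inverses of its elements also lie in $1+m\Lambda$. Finally $U(\Lambda)$ has rank at least $\varphi(k)/2-1\geq 1$, whereas $\mathbb ZC_k$ has only $2k$ trivial units.
\end{itemize}

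Compared with simply citing Higman, your route gives something the paper actually needs. Every nontrivial unit you produce has infinite order. So you prove that $U(\mathbb ZG)$ is \emph{finite} only for the listed groups, not merely that it equals $\pm G$ only for them. The finiteness version is what Proposition 3 and Section 2 invoke when they pass from ``rank zero'' to ``Higman group''. The statement as quoted, ``only trivial units'', does not literally contain it. Your cyclic step is also the same rank-of-a-maximal-order comparison via Dirichlet's theorem that the paper uses throughout Section 2.
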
 

\begin{theorem} [Rich]
   If $G$ is an abelian group from Higman's Theorem above then,  $\mathbb ZG_t$ has only trivial units. \cite{rich2020units}
\end{theorem}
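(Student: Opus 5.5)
Since $G$ is abelian, I would write $\mathbb QG\cong\prod_{\chi}\mathbb Q(\chi)$, the product running over Galois orbits of characters of $G$. The trivial character gives the factor $\mathbb Q\eta_0$, and the remaining factors make up $\mathbb QG\eta'$. From the earlier identification $KG_t\cong KG\eta'$ with $K=\mathbb Q$, this gives
\[
\mathbb ZG_t\hookrightarrow \mathbb QG\eta'\cong\prod_{\chi\neq 1}\mathbb Q(\chi).
\]
The image of $\mathbb ZG_t$ lies in the maximal order $\prod_{\chi\neq1}\mathbb Z[\chi]$. So every unit of $\mathbb ZG_t$ maps into $\prod_{\chi\ne1}U(\mathbb Z[\chi])$.

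For the groups in Higman's list, $G$ has exponent dividing $4$ or $6$. Hence every $\chi$ takes values in $\mathbb Z$, $\mathbb Z[i]$ or $\mathbb Z[\omega]$, with $\omega$ a primitive cube root of unity. Each of these rings has finite unit group, consisting of roots of unity. By Dirichlet's Unit Theorem the rank is $r_1+r_2-1=0$ for $\mathbb Q$, $\mathbb Q(i)$ and $\mathbb Q(\sqrt{-3})$. Therefore $U(\mathbb ZG_t)$ is a finite group, and a unit $\alpha+(\Sigma)$ satisfies $|\chi(\alpha)|=1$ for every nontrivial $\chi$.

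Next I would lift $\alpha$ to $a=\sum a_g g\in\mathbb ZG$; the coset of $a$ modulo $\mathbb Z\Sigma$ is exactly the freedom we have. By orthogonality of characters,
\[
\sum_{\chi}|\chi(a)|^2=n\sum_g a_g^2 .
\]
Splitting off the trivial character, this reads
\[
\epsilon(a)^2+(n-1)=n\sum_g a_g^2 .
\]
Replacing $a$ by $a+k\Sigma$ changes $\epsilon(a)$ by $kn$. So I choose $k$ with $|\epsilon(a)|\le n/2$. I want to conclude that $\sum a_g^2$ is small, namely $1$, or of the form $a=\pm g+c\Sigma$ after a further shift.

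This counting is the main obstacle. The quadratic identity alone gives $\sum a_g^2\le (n^2/4+n-1)/n$, which is not immediately $1$. I would first try to refine it by optimizing over all shifts $a+k\Sigma$. Minimizing $\sum_g (a_g+k)^2$ over $k$ together with the constraint above should force all but one coefficient to be equal. That is exactly the shape $\pm g+c\Sigma$, whose image in $\mathbb ZG_t$ is trivial.

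If the quadratic argument does not close, I would use the product structure instead. Write $G=C_3^m\times C_2^n$ or $C_4^m\times C_2^n$ and induct on the number of cyclic factors. The base cases are done by hand: for $C_2$ and $C_3$, $\mathbb ZG_t\cong\mathbb Z$ and $\mathbb Z[\omega]$ respectively, and for $C_4$ it is $\mathbb Z[x]/(1+x+x^2+x^3)$. For the inductive step I would use Higman's Theorem for $\mathbb ZG$ together with the Milnor square to lift units of $\mathbb ZG_t$. A unit $\bar\alpha$ lifts to a unit of $\mathbb ZG$ whenever $\bar\epsilon(\bar\alpha)\in\pi(U(\mathbb Z))=\{\pm1\}$ in $\mathbb Z/n$. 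Such a lift is then trivial by Higman.

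So the remaining task is to show $\bar\epsilon(U(\mathbb ZG_t))=\{\pm \bar 1\}$. I would do this by evaluating $\bar\epsilon$ through a character congruence, $\chi(\alpha)\equiv\epsilon(\alpha)$ modulo the prime above $p$ for $p\mid n$. Since $\chi(\alpha)$ is a root of unity of order dividing $4$ or $6$, this pins $\bar\epsilon(\bar\alpha)$ down to $\pm1$ modulo each prime power dividing $n$. The delicate step is combining these local conditions across the components. I expect this step, equivalently the sharpened counting above, to carry the real difficulty.
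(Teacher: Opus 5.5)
Your setup is correct: every unit $\alpha$ of $\mathbb ZG_t$ has $|\chi(\alpha)|=1$ for all $\chi\neq1$, so every lift $a$ satisfies $\epsilon(a)^2+n-1=n\sum_g a_g^2$. The gap is that you stop there, and the refinement you propose cannot close it. Because the identity holds for every lift, $\sum_g(a_g+k)^2=(\epsilon(a+k\Sigma)^2+n-1)/n$. So minimizing over shifts is just minimizing $|\epsilon(a)|$, which you have already done. On its own the identity is consistent with, for example, $n=8$, $\epsilon(a)=3$, $\sum_g a_g^2=2$.

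The missing idea is integrality of the coefficients: $a_g^2\ge|a_g|$, so $\sum_g a_g^2\ge\sum_g|a_g|\ge|\epsilon(a)|$. Put $s=|\epsilon(a)|$. The identity then gives $s^2-ns+n-1\ge0$, i.e. $(s-1)(s-(n-1))\ge0$. Since $s\le n/2$, and $n/2<n-1$ for $n\ge3$ (while $s\le1$ outright for $n=2$), you get $s\le1$. The value $s=0$ is excluded because $n\nmid n-1$. Hence $s=1$, $\sum_g a_g^2=1$, $a=\pm g$, and $\alpha=\pm g+(\Sigma)$.

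With this one line your first route is a complete proof, and it is genuinely different from the paper's. Yours is direct and non-inductive, and uses neither Higman's theorem nor any base-case computation. The paper instead proves the stronger statements over $\mathbb Z[i]$ and $\mathbb Z[\omega]$ by induction on cyclic factors, then restricts to $\mathbb Z$. Its inductive step decomposes $K(G\times C_n)_t$, uses that $RG$ and $RG_t$ already have only trivial units, and solves congruence systems modulo $n$.

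Your fallback route does not work as written. The Milnor-square reduction to $\bar\epsilon(U(\mathbb ZG_t))=\{\pm\bar{1}\}$ is fine. But a congruence $\chi(\alpha)\equiv\epsilon(\alpha)$ modulo the prime above $p$ only determines $\epsilon(\alpha)$ modulo $p$, not modulo the full power of $p$ dividing $n$:
\begin{itemize}
\item for $G=C_2^3$, all characters are $\pm1$-valued, so you learn only that $\epsilon(\alpha)$ is odd, which does not exclude $\epsilon(\alpha)\equiv3\pmod{8}$;
\item for $G=C_3^2$, you get $\pm1$ modulo $3$, not modulo $9$.
\end{itemize}
Information modulo $p^k$ requires all characters at once, for example via $na_g=\sum_\chi\chi(a)\overline{\chi(g)}$. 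Even then you would still have to show that the signs at different primes agree. For $G=C_2^2\times C_3$, the residue $5\in(\mathbb Z/12)^\times$ is $1$ mod $4$ and $-1$ mod $3$. So this step is not a technicality: it is the entire content of that route, and your claim that the character congruence pins $\bar\epsilon(\bar\alpha)$ to $\pm1$ modulo each prime power dividing $n$ is false.
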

\begin{theorem} [Herman and Li]
For $R\neq\mathbb Z$, $RG$ has only trivial units if and only if,\\  $$R=\mathbb Z[i], G\cong C_4^m\times C_2^n;$$  $$R=\mathbb Z[\omega], G\cong C_3^m\times C_2^n;$$  $$R=\mathcal{O}_{\mathbb Q(\sqrt{-d})}, G\cong C_2^n;$$ for $i=\sqrt{-1}, \omega=\frac{-1+i\sqrt{3}}{2}$ ,$n,m\geq 0$, $d$ square free $d>1, d\neq 3$. \cite{herman2006trivial}
\end{theorem}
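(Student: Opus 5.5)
The natural route is to show that units of finite order together with rank considerations force everything to be trivial. We split into the ``if'' and ``only if'' directions, working with the Wedderburn decomposition $KG\cong\bigoplus_\chi M_{n_\chi}(D_\chi)$ and the fact that $RG$ is an order in $KG$. By Dirichlet's theorem and its generalization to orders (Borel--Harish-Chandra), $U(RG)$ is finitely generated. Its rank equals the rank of the unit group of a maximal order of $KG$, which is computable from the simple components.

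\textbf{Only if.} Since $TU(RG)$ has rank $r=rk(U(R))$, it suffices to show that for every other pair $(R,G)$ the rank of $U(RG)$ exceeds $r$. For $G$ abelian, $KG\cong\prod_{d} K(\zeta_d)^{a_d}$. The rank of $U(RG)$ is then $\sum_d a_d\, rk\,U(\mathcal O_{K(\zeta_d)})$. This equals $r$ exactly when every component other than the trivial one contributes rank $0$, that is, when all but one factor have unit rank zero. Forcing the trivial component $K$ itself to be the only one of positive rank, while the component $K\eta_0$ already contributes $r$, requires every other $K(\zeta_d)$ to be $\mathbb Q$ or imaginary quadratic with $r=0$. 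This recovers the three listed families, plus $\mathbb Z$, which is excluded.

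The main obstacle is ruling out the case where $U(R)$ has rank $r>0$. There the component count $KG\eta'$ contributes at least $r$ more, which gives rank at least $2r>r$ as soon as $n>1$. This mirrors the remark that $rk(ATU(RG))=2r$. It forces $r=0$, so $K$ is $\mathbb Q$ or imaginary quadratic.

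For nonabelian $G$ we would use the reduction that a subgroup or quotient with nontrivial units yields nontrivial units. Combining this with Higman's list forces $G=Q_8\times C_2^n$. Then $\mathbb H_K$ splits when $K$ is imaginary quadratic, except in finitely many cases, and a split matrix component has infinitely many units. This excludes those $R$. Finally, we check which $d_0$ give cyclotomic fields of unit rank zero. Only $d_0\in\{1,2,3,4,6\}$ survive, and the relation $K(\zeta_d)=K$ or an imaginary quadratic field with torsion units that are contained in $\pm\zeta G$ pins down $\mathbb Z[i]$ with $C_4$, $\mathbb Z[\omega]$ with $C_3$, and general $d$ only with $C_2$.

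\textbf{If.} Here the rank of $U(RG)$ is $0$ by the computation above, so $U(RG)$ is finite, hence torsion. A torsion unit $\alpha$ of an abelian group ring with augmentation $\epsilon(\alpha)=u$ is $\pm$ a trivial unit; this is the Berman--Higman argument, since the coefficient of $1$ in $u^{-1}\alpha$ is $0$ or $1$ for a torsion unit of augmentation $1$. We carry this over to $R$ by using the trace $\mathrm{tr}(\alpha)=\sum_\chi\chi(\alpha)/n$ and the fact that all $\chi(\alpha)$ are roots of unity in $K(\zeta)=K$, and then applying the triangle inequality. This step is the delicate one: we must check that the roots of unity of $K$ do not create extra torsion. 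That is precisely why the pairing between $R$ and the exponent of $G$ appears.
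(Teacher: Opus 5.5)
The paper does not prove this theorem; it only cites Herman and Li, so your outline has to stand on its own. Most of it does. The rank $2r$ of $ATU(RG)$ forces $r=0$, and the cyclotomic rank count settles abelian $G$. The genuine gap is in excluding $Q_8\times C_2^n$ over an imaginary quadratic $K$. Your claim that $\mathbb H_K$ splits except in finitely many cases is false. For imaginary quadratic $K$, $\mathbb H_K=(-1,-1)_K$ can ramify only at places $w\mid 2$, and it does so exactly when $[K_w:\mathbb Q_2]$ is odd. So for $K=\mathbb Q(\sqrt{-d})$ it is a division algebra precisely when $2$ splits in $K$, i.e.\ $d\equiv 7 \pmod 8$ ($d=7,15,23,\dots$). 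For these infinitely many fields your split-component argument says nothing.

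The fix uses a tool you already named, $C_4\le Q_8$. For imaginary quadratic $K\neq\mathbb Q(i)$, the component $K(i)$ of $KC_4$ is a totally complex quartic field of unit rank $1$. Hence $U(RC_4)\subseteq U(RQ_8)$ is infinite, while $TU(RQ_8)$ is finite. Only $K=\mathbb Q(i)$ remains. There $-1$ is a square, so $\mathbb H_K\cong M_2(\mathbb Q(i))$, and your split-component argument (or Herman--Li's explicit unit) applies. This is the same $C_4\subseteq Q_8$ device the paper uses when it treats $Q_8$ in Section 2.

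Two smaller repairs are needed. First, to land in Higman's list, do not appeal to quotients: units of $R(G/N)$ need not lift to $RG$. The correct reduction is $U(\mathbb ZG)\subseteq U(RG)$, together with the fact that a trivial unit $ug$ of $RG$ lying in $\mathbb ZG$ has $u=\pm1$.

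Second, in the ``if'' direction the triangle inequality only gives $|a_1|\le 1$. What makes this decisive is that $|a|^2=N_{K/\mathbb Q}(a)$ is a positive integer for $0\neq a\in R$. This is exactly where $K=\mathbb Q$ or imaginary quadratic is used. Then $a_1\neq 0$ forces equality, so all $\chi(\alpha)$ coincide and $\alpha=a_1\in\mu_K$. Applying this to $g^{-1}\alpha$ finishes the argument; alternatively use Parseval, $\sum_g|a_g|^2=\frac1n\sum_\chi|\chi(\alpha)|^2=1$. There is no ``extra torsion'' problem coming from $\mu_K$: torsion units of $RG$ are trivial for every abelian $G$ over such $R$. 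The pairing of $R$ with $\exp(G)$ is needed only to make the rank zero, i.e.\ to make every unit torsion.
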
 

\begin{theorem} 
     $RG_t$ has only trivial units if and only if one of the following holds:\\
    (1) $G=C_2$ \\
    (2) $G=C_3$, $R=\mathcal{O}_K$ where $K$ is a totally real field . \\
    (3)\footnote{This was proved in Shani's Thesis} $R$ and $G$ are as in Higman's or Herman and Li's Theorems above.  
   
\end{theorem}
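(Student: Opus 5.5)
The plan is a rank-and-torsion comparison via the Wedderburn decomposition of $KG_t\cong KG\eta'$. Write $KG\eta'\cong\prod_{i} A_i$, the product of the simple components of $KG$ other than the trivial one $K\eta_0$. The image of $RG_t$ in $\prod_i A_i$ is an order. Its unit group is commensurable with $\prod_i U(\Lambda_i)$ for maximal orders $\Lambda_i\subset A_i$. Hence
\[
rk\,U(RG_t)=\sum_i rk\,U(\Lambda_i),
\]
where each term is computed by Dirichlet's Unit Theorem for the commutative components, and by the reduced-norm-one group for the noncommutative ones. Since $rk\,TU(RG_t)=r=rk\,U(R)$, a necessary condition for only trivial units is $\sum_i rk\,U(\Lambda_i)=r$. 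Once the rank matches, a second step must show that $U(RG_t)$ contains no torsion beyond $\pm U(R)_{tors}\,G$ and no extra finite-index units.

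\textbf{Cases (1) and (2).} For $G=C_2$, sending $x\mapsto -1$ gives $RG_t=R[x]/(x+1)\cong R$. Every unit is then $u=u\cdot 1$, and $-u$ is $u\cdot x$, so all units are trivial. For $G=C_3$ we have $RG_t\cong R[x]/(x^2+x+1)=R[\omega]$, an order in $L=K(\omega)$.
\begin{itemize}
\item If $K$ is totally real of degree $d$, then $L$ is CM of degree $2d$, so $rk\,U(L)=d-1=r$.
\item Otherwise $K$ has $r_2(K)\ge1$, so $r_2(L)=d$ and $rk\,U(L)=d-1$. This exceeds $r=r_1+r_2-1$ because $r_1+r_2<d$. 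So non-totally-real $K$ fails.
\end{itemize}
For totally real $K$ it remains to show $U(R[\omega])=U(R)\cdot\langle -\omega\rangle$. Given a unit $\varepsilon$, the quotient $\varepsilon/\bar\varepsilon$ is a root of unity, by the CM-field argument that all its archimedean absolute values are $1$. One then shows, using the explicit basis $\{1,\omega\}$ of $R[\omega]$ over $R$, that $\varepsilon$ is a real unit times a power of $-\omega$. This is a Hasse-unit-index computation. The delicate subcase is when $L$ contains extra roots of unity, for instance $K\ni\sqrt3$; there I would check directly that $i\notin R[\omega]$.

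\textbf{Necessity in general.} For $|G|\ge4$, or $G=C_3$ with $K$ not totally real, I count components.
\begin{itemize}
\item If $KG$ has a noncommutative component other than $\mathbb H$-type quaternions over $\mathbb Q$, then $SL_1$ of it has positive rank, which is too much.
\item If there is a commutative component $K(\zeta_m)$ with $m\ne1,2$ that is not CM over a totally real $K$, or if there are at least two nontrivial components each carrying rank $r>0$, the sum exceeds $r$.
\end{itemize}
The rank condition holds exactly when either $r=0$ and all components have rank $0$, or the only nontrivial component is one copy of $K$ or of a CM extension. The first alternative forces $K=\mathbb Q$ or imaginary quadratic and $G$ in the Higman / Herman--Li lists. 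The second gives the $C_2$ and $C_3$ cases. This reproduces the group-theoretic lists via the same exponent arguments as Higman's and Herman--Li's proofs: components $\mathbb Q(\zeta_m)$ with $\varphi(m)>2$, or $K(\zeta_m)$ of positive rank, are excluded.

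\textbf{Sufficiency in (3), the main obstacle.} In case (3) all components have unit rank $0$, so $U(RG_t)$ is finite. I must show it equals $U(R)G$ modulo $\Sigma$. For the abelian groups over $\mathbb Z$ this is Rich's theorem. For the others I would adapt Higman's argument. A torsion unit $\alpha=\sum a_g g$ of $RG_t$ can be lifted to $RG$ so that its projection to every nontrivial component is a root of unity. Comparing the trace, i.e. the coefficient of $1$ computed as the average over characters, then forces a single $a_g$ to dominate modulo $\Sigma$. The difficulty is that the lift is only determined up to $k\Sigma$, so the trace bound must be run on $\alpha\eta'$ rather than on $\alpha$. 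For $Q_8\times C_2^n$ the quaternion component must be treated separately using its finite unit group.
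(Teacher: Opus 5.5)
\textbf{The gap is in the sufficiency half of case (2).} This is the genuinely new part of the theorem. You reduce correctly to $U(R[\omega])=U(R)\langle-\omega\rangle$ and note that $\zeta=\varepsilon/\bar\varepsilon$ is a root of unity. But the step that excludes extra units (``one then shows, using the basis $\{1,\omega\}$\dots'') is never supplied. The one concrete check you propose, $i\notin R[\omega]$ when $\sqrt3\in K$, cannot carry it, for two reasons.
\begin{enumerate}
\item The field $L$ can contain many other roots of unity: $\mu_L=\mu_{18}$ for $K=\mathbb{Q}(\zeta_9)^+$, and $\mu_L=\mu_{30}$ for $K=\mathbb{Q}(\zeta_{15})^+$.
\item More seriously, when the Hasse index is $2$ there are units outside $U(\mathcal{O}_K)\mu_L$ even if $\mu_L=\mu_6$. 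For $K=\mathbb{Q}(\sqrt6)$, the element $\gamma=(3+\sqrt6)/\sqrt{-3}$ is a unit with $\gamma^2=-(5+2\sqrt6)$ and $\gamma/\bar\gamma=-1$. No check on roots of unity sees it.
\end{enumerate}
The missing idea is a divisibility that handles all cases at once. Writing $\varepsilon=a+b\omega$ gives $\varepsilon-\bar\varepsilon=b(\omega-\omega^2)=b\sqrt{-3}$. Also $\varepsilon-\bar\varepsilon=\varepsilon(1-\zeta^{-1})$, so $\sqrt{-3}\mid 1-\zeta$ in $\mathcal{O}_L$. Now use the behaviour of $1-\zeta$:
\begin{itemize}
\item it is a unit when the order of $\zeta$ has two distinct prime factors;
\item it is prime to $3$ when the order is a power of a prime $p\neq3$;
\item when the order is $3^k$ with $k\ge2$, its valuation at every prime above $3$ is $3^{-(k-1)}$ times that of $\sqrt{-3}$.
\end{itemize}
Hence $\zeta\in\mu_3$, so $\zeta=\xi^2$ with $\xi=\zeta^2\in\mu_3$. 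Then $\varepsilon\xi^{-1}$ is fixed by complex conjugation, so it lies in $K\cap R[\omega]=R$. This is the same divisibility the paper extracts from $\mathrm{disc}\,\mathcal{O}_K[\omega]\mid\mathrm{disc}\,\mathcal{O}_K[u]$. Working with $\zeta=\varepsilon/\bar\varepsilon$ directly even lets you skip the paper's split into Hasse index $1$ and $2$.

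\textbf{Necessity.} Your argument is essentially the paper's component count, but it has two slips.
\begin{itemize}
\item Your $C_3$ rank computation assumes $\omega\notin K$. If $\omega\in K$, then $R[x]/(x^2+x+1)$ is an order in $K\times K$ of unit rank $2r$. So ``non-totally-real $K$ fails'' is false for $K=\mathbb{Q}(\omega)$, which is case (3).
\item Your noncommutative bullet is wrong as stated. For totally real $K\neq\mathbb{Q}$, $\mathbb{H}_K$ is totally definite, so $SL_1$ of an order in it is finite. $Q_8\times C_2^n$ is excluded for $r\ge1$ only because $KQ_8\eta'$ has four simple components, each contributing rank $\ge r$. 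The paper instead reduces to $C_4\le Q_8$.
\end{itemize}

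\textbf{Case (3).} Your trace argument is a genuinely different and more uniform route than the paper's, which computes idempotents for $C_2^2$, $C_3$, $C_4$, $Q_8$ and then inducts through $G\times C_2$, $G\times C_3$, $G\times C_4$. It does close, but you must write it out. Let $\tilde\alpha=\sum a_gg$ lift $\alpha$ and put $s=\sum a_g$. The image of $\tilde\alpha$ in every nontrivial component is unitary: a root of unity in the commutative ones, and a reduced-norm-one quaternion in the definite quaternion ones. Parseval for $\alpha\eta'=\sum(a_g-s/n)g$ then gives $\sum_g|a_g-s/n|^2=(n-1)/n$, that is, $\sum_{\{g,h\}}|a_g-a_h|^2=n-1$. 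Each nonzero term is a positive integer. Splitting $G$ into at least two classes of equal coefficients produces at least $n-1$ unequal pairs, with equality only for class sizes $n-1$ and $1$. Hence $\tilde\alpha=k\Sigma+ug$ with $|u|=1$, i.e.\ $u\in U(R)$.
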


\section{The rank of $U(RG_t)$}
Let $M$ denote a maximal order of $KG$, and $M_t$ a maximal order of $KG_t$. Recall $R=\mathcal O_K$. The $R$-rank of $M$ can be generalized and defined as $rk_R(M)=dim_K(K\otimes_R M)$. (From \cite{reiner2003maximal} pp. 44). And therefore, the rank of units in an order is the same as the rank of the units of a maximal order.\\  Specifically, $rk(U(RG))=rk(U(M))$ and $rk(U(RG_t))=rk(U(M_{t}))$.
\begin{prop}
    If $U(\mathbb ZG_t)=TU(\mathbb ZG_t)$ Then $G$ is $G=
\begin{cases}
    C_2^n \times C_3^m \\ 
    C_2^n \times C_4^m \\
    C_2^n \times Q_8 
\end{cases}$
\\For  $n,m\geq 0$.   
\end{prop}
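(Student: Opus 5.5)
Since $TU(\mathbb ZG_t)$ is finite ($U(\mathbb Z)=\{\pm1\}$ and $G$ is finite), the hypothesis $U(\mathbb ZG_t)=TU(\mathbb ZG_t)$ forces $rk(U(\mathbb ZG_t))=0$. By the discussion preceding the statement, this rank equals $rk(U(M_t))$ for a maximal order $M_t$ of $\mathbb QG_t\cong \mathbb QG\eta'$. So we compute $rk(U(M_t))$ from the Wedderburn decomposition $\mathbb QG\eta'=\bigoplus_{i} M_{n_i}(D_i)$, where $\mathbb QG=\mathbb Q\oplus\bigoplus_i M_{n_i}(D_i)$ and we have removed the trivial component $\mathbb Q\eta_0$. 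The unit rank of a maximal order is additive over simple components. A component $M_{n_i}(D_i)$ with $n_i\ge 2$ contributes an infinite non-abelian unit group (it contains unipotents $1+xE_{12}$ of infinite order), hence positive rank. A component with $n_i=1$ has unit rank zero only if $D_i$ is $\mathbb Q$, an imaginary quadratic field, or a totally definite quaternion algebra over $\mathbb Q$. Thus rank zero holds exactly when every nontrivial Wedderburn component of $\mathbb QG$ is of this restricted type.

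The key observation is that this condition is identical to the condition on $\mathbb QG$ itself, except for the single summand $\mathbb Q\eta_0$, which has finite unit group $\{\pm1\}$ and so contributes rank $0$ regardless. Hence $rk(U(\mathbb ZG_t))=rk(U(M_t))=rk(U(M))=rk(U(\mathbb ZG))$, using the same ``rank of an order equals rank of a maximal order'' principle for $\mathbb ZG$. So $rk(U(\mathbb ZG))=0$, meaning $U(\mathbb ZG)$ is finite (the torsion of a finitely generated unit group of an order is finite).

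Finally, we deduce from finiteness of $U(\mathbb ZG)$ that $U(\mathbb ZG)=TU(\mathbb ZG)$, and then invoke Higman's Theorem to obtain the three listed families. This step is the main obstacle. The implication ``finite implies trivial'' is classical: the torsion units of $\mathbb ZG$ have augmentation $\pm1$, and a finite group of such units must consist of $\pm g$ by the Berman--Higman argument. This argument runs as follows. A torsion unit $\alpha=\sum a_gg$ with $a_1\neq0$ is $\pm1$, since the trace of $\alpha$ in the regular representation equals $n a_1$ and is a sum of $n$ roots of unity. Applying this to $\alpha g^{-1}$ for each $g$ with $a_g\ne0$ shows that $\alpha=\pm g$.

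If one prefers to avoid that, an alternative route is to do the Wedderburn analysis directly. Every component $M_{n_i}(D_i)$ with $n_i=1$ and $D_i$ of the allowed types forces all cyclic subgroups to have order in $\{1,2,3,4,6\}$, since $\mathbb Q(\zeta_k)$ must be $\mathbb Q$ or imaginary quadratic. Moreover, every nonabelian image must be a totally definite quaternion algebra, forcing $Q_8$-type quotients. Then the group-theoretic classification in Higman's proof yields $C_2^n\times C_3^m$, $C_2^n\times C_4^m$, or $C_2^n\times Q_8$. I would present the first route and cite Higman's Theorem for the classification.
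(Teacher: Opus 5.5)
Your proof is correct and follows essentially the paper's route. Both arguments use $\mathbb QG\cong\mathbb Q\oplus\mathbb QG_t$, so that $M\cong\mathbb Z\oplus M_t$ and $U(M)\cong\{\pm1\}\times U(M_t)$, to pass rank zero (finiteness) of units from $\mathbb ZG_t$ to $\mathbb ZG$, and then apply Higman's theorem. Your Wedderburn-component analysis is not needed, and your explicit Berman--Higman step (finite $U(\mathbb ZG)$ forces $U(\mathbb ZG)=TU(\mathbb ZG)$) only spells out what the paper leaves implicit when it cites Higman as ``rank zero iff Higman's groups''.
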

\begin{proof}
    
Let $K=\mathbb Q$. Recall, $$\mathbb QG\cong \mathbb Q \oplus \mathbb QG_t$$ so $M\cong \mathbb Z \oplus M_t$. Since $rk(U(\mathbb Z))=0$, if we assume $U(\mathbb ZG_t)=TU(\mathbb ZG_t)$ then it must be the case that $rk(U(\mathbb ZG_t))=0$, and also $rk(U(\mathbb ZG))=0$. From Higman we know $rk(U(\mathbb ZG))=0$ if and only if $G=C_2^n \times C_3^m , C_2^n \times C_4^m , C_2^n \times Q_8 .$ Where $n,m\geq0$. We will call those Higman's Groups. 
\end{proof} 
Therefore, if $\mathbb ZG_t$ has only trivial units $G$ must be one of the Higman's Groups. 

\begin{remark}
Since $\mathbb Z \hookrightarrow R $, if $RG_t$ has only trivial units then $G$  
is one of Higman's Groups.\\

\end{remark}

\begin{prop}
    If $RC_4$ has non trivial units, then $RQ_{8_t}$ has non trivial units.
\end{prop}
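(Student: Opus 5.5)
The plan is to use the embedding $C_4\cong\langle i\rangle\leq Q_8$ and compare supports. Since $\langle i\rangle\le Q_8$, the inclusion gives a ring embedding $RC_4\hookrightarrow RQ_8$. Composing with the quotient map $RQ_8\to RQ_{8_t}=RQ_8/(\Sigma)$ gives a ring homomorphism $RC_4\to RQ_{8_t}$, and ring homomorphisms send units to units.

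So take a non-trivial unit $\alpha\in U(RC_4)\setminus TU(RC_4)$, and let $\bar\alpha$ be its image in $U(RQ_{8_t})$. It remains to show that $\bar\alpha\notin TU(RQ_{8_t})$.

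Suppose instead that $\bar\alpha=ug+(\Sigma)$ for some $u\in U(R)$ and $g\in Q_8$. Then in $RQ_8$ we have
\[
\alpha=ug+\beta\Sigma \quad\text{for some } \beta\in RQ_8 .
\]
Since $h\Sigma=\Sigma$ for all $h\in Q_8$, we get $\beta\Sigma=\epsilon(\beta)\Sigma$. Hence $\alpha=ug+k\Sigma$ with $k=\epsilon(\beta)\in R$, which is the same shape as in the second Proposition.

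Now compare coefficients on the coset $Q_8\setminus\langle i\rangle=\{j,-j,k,-k\}$ (here $k$ denotes the quaternion unit, not the scalar).
\begin{itemize}
\item Because $\alpha$ lies in $RC_4=R\langle i\rangle$, all four of these coefficients of $\alpha$ are $0$.
\item On the right-hand side, each of these four elements has coefficient $k$, except possibly $g$ itself, whose coefficient is $u+k$.
\end{itemize}
Since the coset has four elements and $g$ is at most one of them, some element of the coset has coefficient exactly $k$. Therefore $k=0$, and so $\alpha=ug$.

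Finally, $\alpha\in RC_4$ forces $g\in\langle i\rangle$: otherwise the coefficient of $g$ in $\alpha$ would be $u\neq 0$, contradicting the support of $\alpha$. Thus $\alpha=ug\in TU(RC_4)$, contradicting the choice of $\alpha$. Hence $\bar\alpha$ is a non-trivial unit of $RQ_{8_t}$.

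The only point needing care is reducing an arbitrary element of the ideal $(\Sigma)$ to a scalar multiple $k\Sigma$, which is handled by $\beta\Sigma=\epsilon(\beta)\Sigma$. After that, the argument is just the support comparison above.
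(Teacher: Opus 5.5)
Your proposal is correct and follows essentially the same route as the paper: embed $RC_4$ into $RQ_8$, pass to $RQ_{8_t}$, and compare the support of $\alpha$ with that of a putative trivial unit $ug+k\Sigma$. Your version is more careful than the paper's in two places. You reduce an arbitrary element of $(\Sigma)$ to $\epsilon(\beta)\Sigma$, and your coset-coefficient argument forces $k=0$ uniformly for both $g\in C_4$ and $g\notin C_4$, a step the paper leaves implicit.
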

\begin{proof}

Let $C_4$ be a cyclic subgroup of $Q_8$ of order $4$.
Note the embedding of $RC_4\hookrightarrow RQ_8$ and the surjection $RQ_8\twoheadrightarrow RQ_{8_t}$. So we have the map $RC_4\rightarrow RQ_{8_t}$ via $\sum \alpha_g g \mapsto \sum\alpha_g g +(\Sigma)$. Where $(\Sigma)$ is the ideal generated by $\sum_{g\in Q_8} g$, no non-zero elements of $C_4$ are contained in $(\Sigma)$. Take a non trivial unit in $v\in RC_4$. Its image is $v'$. Assume to the contrary, only trivial units are in $RQ_{8_t}$. So this is a trivial unit, then it is $ug+n\Sigma$ for some $u\in U(R), g\in Q_8, n\in R$. If $g\in C_4$ then the coefficient of $g\in v'$ is $u+n$ while the rest of the element will must have coefficient $n$, so pulled back to $RQ_8$ will result in $ug$, since $RC_4\hookrightarrow RQ_8$ then it imply the unit is trivial in $RC_4$. Therefore, $g\in Q_8$ not in $C_4$. Then, pulled back to $RQ_8$ it is $ug$ with $u\in U(R), g\in Q_8$ but $g$ is not in the cyclic subgroup $C_4$. So that trivial unit does not have a pre-image in $RC_4$. \\ 
We now examine the case $rk(U(R))=0$ and $R\neq \mathbb Z$. We know this means $R$ is the maximal order of a quadratic imaginary number field. 
\end{proof}
 \begin{prop}
     Let $R:= \mathcal O_{\mathbb Q(\sqrt{-d})}$ where $d>0$, square free integer. If $G$ is an abelian Higman's group such that $RG_t$ has only trivial units, then $RG$ are only the group rings that are  mentioned in Theorem 3. 
 \end{prop}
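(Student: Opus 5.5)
The plan is to reduce to a rank computation via Wedderburn decompositions. Since $R=\mathcal O_{\mathbb Q(\sqrt{-d})}$ has $rk(U(R))=0$, the trivial units of $RG_t$ form a finite group. So the hypothesis $U(RG_t)=TU(RG_t)$ forces $rk(U(RG_t))=0$. Because $KG\cong K\oplus KG_t$ and $rk(U(R))=0$, we then get $rk(U(RG))=rk(U(M))=0$, where $M$ is a maximal order of $KG$; here we use the remark that the unit rank of an order equals that of a maximal order. Since $RG$ is an order with finite unit group containing the finite group $TU(RG)$, the goal is to show that $rk(U(RG))=0$ already pins $(R,G)$ down to the list in Theorem 3.

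For $G$ abelian, $KG\cong\bigoplus_{\chi} K(\chi)$, where the sum runs over Galois orbits of characters of $G$ over $K$. Hence $rk(U(M))=\sum rk(U(\mathcal O_{K(\chi)}))$, and this vanishes exactly when every field $K(\chi)=K(\zeta_e)$, for $e$ the exponent of a cyclic quotient, is $\mathbb Q$ or imaginary quadratic. Since $K$ is already imaginary quadratic, we need $K(\zeta_e)=K$ for every $e$ dividing $\exp(G)$. By the Remark, $G$ is one of the abelian Higman groups $C_2^n\times C_3^m$ or $C_2^n\times C_4^m$, so we split into cases.
\begin{itemize}
\item If $m=0$, then $G=C_2^n$, all characters are rational, and every $d$ is allowed. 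This gives the third family, together with $d=1,3$, which are subsumed in the first two families.
\item If $G$ contains $C_3$ ($m\ge1$), we need $\zeta_3\in K$, that is, $K=\mathbb Q(\sqrt{-3})$ and $R=\mathbb Z[\omega]$.
\item If $G$ contains $C_4$, we need $i\in K$, that is, $R=\mathbb Z[i]$.
\end{itemize}
In each case the surviving pairs $(R,G)$ are exactly those of Theorem 3 (Herman and Li).

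Finally, I would note that for these pairs $RG$ indeed has only trivial units by Theorem 3, which completes the identification claimed. The main obstacle is the passage from "rank zero" back to "only trivial units" and the precise meaning of the statement, since finiteness of $U(RG)$ does not by itself give triviality. For that converse direction I would simply invoke Herman and Li rather than reprove it. The step I would check most carefully is the rank bookkeeping $rk(U(RG))=rk(U(R))+rk(U(RG_t))$ via maximal orders.
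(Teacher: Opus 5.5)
Your proof is correct. The first half matches the paper's: both use $M\cong R\eta_0\oplus M_t$ and $rk(U(R))=0$ to get $rk(U(RG))=rk(U(RG_t))=0$.

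The two arguments diverge in the last step. The paper says that ``from Herman and Li we can deduce'' that $rk(U(RG))=0$ is equivalent to $RG$ having only trivial units. It then reads the list off Herman--Li's classification, and carries out the underlying mechanism only for $G=C_4$ and $G=C_3$. You instead classify the rank-zero pairs directly. For abelian $G$ you have $KG\cong\bigoplus K(\zeta_e)$, and Dirichlet forces each $K(\zeta_e)$ to be $\mathbb Q$ or imaginary quadratic. Hence $K(\zeta_e)=K$, i.e.\ $\zeta_{\exp(G)}\in K$. For the abelian Higman groups this leaves exactly:
\begin{itemize}
\item $C_2^n$ with any $d$;
\item $C_2^n\times C_3^m$ with $K=\mathbb Q(\sqrt{-3})$;
\item $C_2^n\times C_4^m$ with $K=\mathbb Q(i)$.
\end{itemize}

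This is the paper's $C_3$/$C_4$ illustration carried out for every abelian Higman group, and it makes the argument rigorous. As you note, finiteness of $U(RG)$ does not by itself give triviality. The direction ``rank zero $\Rightarrow$ only trivial units'' is not part of Herman--Li's theorem as quoted (Theorem 3). It can only be obtained by first showing that rank zero forces $(R,G)$ into the list, which is precisely your computation, so the paper's appeal to it is circular as written. The paper's version is shorter but rests on that unproved step. Yours needs Herman--Li only for the observation that the surviving pairs do have only trivial units, which the statement does not actually require.
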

\begin{proof}
    
From Herman and Li we can deduce that $rk(U(RG))=0$ if and only if $RG$ has only trivial units. Since $RG_t$ has only trivial units then $rk(U(RG_t))=rk(U(R))=0$, so also the rank of $RG$ must be zero (The decomposition $RG\cong R\oplus M_t$ grantees that).  So now that the $rk(U(RG))=0$, we use Herman \& Li theorem to conclude the group rings that we can have. To further demonstrate that results, consider what happen when $G=C_4, C_3$:
\\ If $G\cong C_4$, the decomposition would be $\mathbb Q(\sqrt{-d})C_4\cong \mathbb Q(\sqrt{-d})\eta_0 \oplus \mathbb Q(\sqrt{-d})C_4\eta' $. $M\cong \mathcal{O}_{\mathbb Q(\sqrt{-d})}\eta_0 \oplus M_t$. Under the assumption $RG_t$ has only trivial units and $rk(U(R))=0$, it means the $rk( U(M_t))=0$. That can only happen if $d=1$ so $K=\mathbb Q(i)$. 
 \\ Similarly, if $G\cong C_3$, the decomposition would be $\mathbb Q(\sqrt{-d})C_3\cong \mathbb Q(\sqrt{-d})\eta_0 \oplus \mathbb Q(\sqrt{-d})C_3\eta'$. $M\cong \mathcal{O}_{\mathbb Q(\sqrt{-d})}\eta_0 \oplus M_t$. Under the assumption $RG_t$ has only trivial units and $rk(U(R))=0$, it means the $rk( U(M_t))=0$. That only happens when $d=3$ so $K=\mathbb Q(\omega)$. 
\end{proof}

\begin{prop}
        If $rk(U(R))\geq 1$ and $RG_t$ has only trivial units then either $G=C_2$ or $G=C_3$ and $K$ must be a totally real field.   
\end{prop}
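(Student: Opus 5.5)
Since only trivial units in $RG_t$ forces $rk(U(RG_t)) = rk(U(R)) = r$, the plan is a rank count. By the discussion opening this section, $rk(U(RG_t)) = rk(U(M_t))$, where $M_t$ is a maximal order in $KG_t \cong KG\eta'$. By the Remark, $G$ must be one of Higman's groups. So I would compute $rk(U(M_t))$ from the Wedderburn decomposition of $KG\eta'$ and show that it exceeds $r$ except in the two listed cases.

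\textbf{Unit rank of a maximal order.} Write $KG\eta' \cong \bigoplus_i M_{n_i}(D_i)$. The unit rank of a maximal order in a simple component is governed by its centre $L_i$; for commutative components it equals $rk(U(\mathcal O_{L_i}))$. For abelian $G$ we have $KG \cong \bigoplus_{\chi} K(\chi)$, summed over Galois orbits of characters, and $\eta'$ removes exactly the trivial character. Hence
\[
rk(U(RG_t)) = \sum_{\chi \neq 1} rk\bigl(U(\mathcal O_{K(\chi)})\bigr).
\]
Every summand $K(\chi)$ contains $K$. Moreover, a finite extension $L \supseteq K$ satisfies $rk(U(\mathcal O_L)) \geq rk(U(\mathcal O_K))$, with equality only when $L = K$ or $L/K$ is a CM-type extension of a totally real $K$ of degree $2$. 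I would prove this by counting archimedean places.

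\textbf{Case analysis.}
\begin{enumerate}
\item If $G$ has at least two nontrivial character orbits, each contributes at least $r \geq 1$, giving total rank $\geq 2r > r$. This kills every abelian Higman group except those with a single nontrivial orbit: $C_2$, $C_3$ (when $\omega \notin K$), and $C_4$ only if it had one orbit, but $C_4$ has the quadratic character plus the faithful orbit, so two orbits.
\item For $G = C_2$ the component is $K$ itself, with rank $r$. This is consistent, and Rich / Theorem 4(1) confirm it has only trivial units.
\item For $G = C_3$, if $\omega \in K$ there are two orbits and we are back in case 1. Otherwise the component is $K(\omega)$, a quadratic extension, and $rk(U(\mathcal O_{K(\omega)})) = rk(U(\mathcal O_K))$ holds iff $K$ is totally real: then every real place of $K$ becomes complex, so $r_1 + r_2 - 1$ is preserved. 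If $K$ has a complex place, the rank strictly increases.
\item For $G = Q_8 \times C_2^n$ the nonabelian component $K \otimes \mathbb H$ is present alongside the abelian characters of $Q_8/\{\pm1\}$. Already the three nontrivial linear characters give rank $\geq 3r$, so this case is excluded.
\end{enumerate}

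\textbf{Main obstacle.} The delicate step is the inequality $rk(U(\mathcal O_L)) \geq rk(U(\mathcal O_K))$ with the exact equality case. Each real place of $K$ yields at least one place of $L$, and each complex place of $K$ yields $[L:K]$ complex places. So $r_1(L) + r_2(L) \geq r_1(K) + r_2(K)$, with equality iff $[L:K] = 1$, or $[L:K] = 2$ with no complex places in $K$ and every real place becoming complex.

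Applying this to $K(\omega)$ gives the totally real condition. I would also remark that in the $C_3$ case, rank equality alone does not give only trivial units. That sufficiency direction is Theorem 4(2) and is not needed for this necessity statement.
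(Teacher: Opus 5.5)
Your proposal is correct. For abelian $G$ it follows the paper's line: both pass to $rk(U(M_t))$, split $KG\eta'$ into fields $K(\chi)\supseteq K$, note that each field contributes rank at least $r\geq 1$, and conclude that there is exactly one nontrivial component.

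From there the two arguments close the abelian case differently. The paper uses $[K(\zeta):K]=|G|-1$ and solves the embedding equations $(n-1)(r_1+2r_2)=R_1+2R_2$, $r_1+r_2=R_1+R_2$. This excludes $n>3$ and forces $r_2=0$ when $n=3$. Your place-counting lemma (equal unit ranks iff $L=K$ or $L/K$ is a totally imaginary quadratic extension of a totally real field) is the same computation in a cleaner, general form. You should, however, justify why a single nontrivial Galois orbit forces $G\in\{C_2,C_3\}$, rather than only checking $C_4$. The reason is that characters in one orbit generate the same cyclic subgroup of $\widehat{G}\cong G$, so $G$ must be cyclic of prime order.

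The genuinely different step is $Q_8$. The paper maps $RC_4$ into $RQ_{8_t}$ (Proposition 4) and imports a nontrivial unit of $RC_4$ from Herman and Li. You instead stay inside the rank count. The nontrivial linear characters give at least three copies of $\mathcal{O}_K$ as direct factors of $M_t$. So $U(M_t)$, and hence its finite-index subgroup $U(RG_t)$, contains a free abelian group of rank $3r>r$.

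Your route needs no external classification, and it treats every $Q_8\times C_2^n$ at once, whereas the paper only handles $Q_8$ itself. It also uses only the commutative components. That means your aside that the unit rank of a noncommutative component is ``governed by its centre'' is never needed. This is fortunate, because the aside fails in general: if $K$ has a complex place, $K\otimes\mathbb{H}$ splits there and its unit group is not virtually abelian. The paper's route, by contrast, transports an actual unit through a ring map, at the cost of relying on Proposition 4 and the Herman--Li theorem.
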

\begin{proof}
    
First, consider only abelian Higman's Groups. The assumption that $RG_t$ has only trivial units means if we denote $rk(U(RG_t))=r$ then $rk(U(RG))=2r$. $$KG\cong K\eta_0\oplus KG\eta'\cong K\eta_0 \oplus KG_t$$ Moreover, we can decompose $KG\eta'$ into $\oplus_{l=1}^{l=i}KG\eta_i$ where $\eta_i$ are orthogonal, primitive idemponets of $KG$ and $\eta_i\neq \eta_0$. Then, from Higman we know $\oplus_{l=1}^{l=i}KG\eta_i \cong \oplus_{l=1}^{l=i}K(\zeta_i)$ via $g\eta_i\mapsto \zeta_i$. Where $\zeta_i$ is a root of unity. Since each $K(\zeta_i)$ is an extension field of $K$, and the rank of $U(R)$ is $r$. The rank of the maximal order of each extension must be at least $r$. Therefore, $i=1$. For conveniences, $\zeta_1=\zeta$. We can only have the decomposition $KG\cong K\eta_0 \oplus KG\eta_1 \cong K \oplus K(\zeta)$ with $\eta_0+\eta_1=1$. Moreover, for the dimension over $K$ to match, we must have $[K(\zeta):K]=|G|-1$.

If $|G|=n>3$. Then, $[K(\zeta):\mathbb Q]=[K(\zeta):K][K:\mathbb Q]=(n-1)(r_1+2r_2)=R_1+2R_2$. Where $r_1$ is the real embedding of $K$ into $\mathbb Q$, $r_2$ is the complex embedding of $K$ into $\mathbb Q$, $R_1$ is the real embedding of $K(\omega)$ into $\mathbb Q$, $R_2$ is the complex embedding of $K(\zeta)$ into $\mathbb Q$. Moreover, since the rank of $U(R)$ is the same as the rank of units of $\mathcal{O}_{K(\zeta)}$.  $r_1+r_2-1=R_1+R_2-1$.  Solving this system of equations with the conditions that $r_1,r_2, R_1, R_2$ are non-negative and $n>3$ would lead to a contradiction because $r_1+r_2-1=R_1+R_2-1$ implies $2r_1+2r_2=2R_1+2R_2$. Then subtract this equation from $(n-1)(r_1+2r_2)=R_1+2R_2$ to get $(n-3)r_1+2(n-2)r_2=-R_1$. Since $R_1$ is non-negative, $-R_1$ must be negative or $R_1=0$. However, $n-3>0$ as well as $2(n-2)>0$. So $(n-3)r_1+2(n-2)r_2$ must be positive. (Note that it can't be zero, because we can't have both real and complex embedding zero). 
\\
Moreover, when $G=C_3$, $\zeta=\omega$, $[K(\omega):\mathbb Q]=[K(\omega):K][K:\mathbb Q]=2(r_1+2r_2)=R_1+2R_2$. And $r_1+r_2-1=R_1+R_2-1$. Solving this case in a similar way leads to , $2r_2=-R_1$. Since both must be non-negative, both must be zero for this equality to hold. If $r_2=0$, it means $K$ is be a totally real field. \\
Now, for $G=Q_8$. Simply apply Proposition 4. We know $RC_4$ can only have trivial units when $K=\mathbb Q, \mathbb Q(i)$ from Herman and Li \cite{herman2006trivial}. They provide a non trivial unit in $\mathbb Z[i]Q_{8}$ , and that unit is also seen to be non trivial in $\mathbb Z[i]Q_{8_t}$ as there is no way write it $ug+n\Sigma$ for $u\in U(R), g\in Q_8, n\in R$. The unit is $u=1+(1-x^2)(-1+(1+i)x+(1-i)y)$. ($Q_8=\langle x, y| x^4=1, x^2=y^2, xy=y^{-1}x\rangle $)\\
\end{proof} 

In this section we covered all the potentials $R's$ and $G's$ we can have in order for $RG_t$ to have only trivial units. In the next section we prove each case does have only trivial units.
\section{Trivial Units of $RG_t$}
$RC_{2_t}$ has only trivial units. That is trivial, as $RC_{2_t}\cong R$. Note that it means that $U(RC_2)=ATU(RC_2)$ so,
$$U(RC_2)=\{\frac{u+ug}{2}+\frac{v-vg}{2} | u,v \in U(R), u= v \mod 2, g\in C_2 , g^2=1_G\}$$ 
This section is based on Shani's PhD thesis \cite{shani2024trivial}:
\begin{prop}
    $R(C_2\times C_{2})_t$ has only trivial units when $U(R)$ is finite.
\end{prop}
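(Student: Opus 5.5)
We prove the statement by computing the reduced group ring explicitly. Write $G=C_2\times C_2=\{1,a,b,ab\}$. Since $U(R)$ is finite, $R$ is $\mathbb Z$ or the ring of integers of an imaginary quadratic field, as noted earlier. In both cases $U(R)$ is torsion, so every unit of $R$ is a root of unity.

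\textbf{Step 1: the embedding.} In $KG$ the primitive idempotents other than $\eta_0$ are $\eta_\chi=\frac14\sum_g\chi(g)g$ for the three nontrivial characters $\chi$ of $G$, all with values $\pm1$. Hence $KG_t\cong KG\eta'\cong K^3$, via $x\mapsto(\chi_1(x),\chi_2(x),\chi_3(x))$. Under this map $RG_t$ embeds as a full $R$-order in $R^3$. The maximal order is $M_t=R^3$, so $U(M_t)=U(R)^3$ is a finite group.

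\textbf{Step 2: identifying the units.} A unit $\alpha$ of $RG_t$ maps to $(u_1,u_2,u_3)\in U(R)^3$. Lift it to $RG$ as $\alpha=c_1+c_aa+c_bb+c_{ab}ab$, which is determined modulo $\Sigma$. We may normalise the lift so that the $\eta_0$-component $c_1+c_a+c_b+c_{ab}$ is any convenient value $w$, since adding $k\Sigma$ shifts this sum by $4k$. By Fourier inversion, $c_g=\frac14\bigl(w+\sum_\chi \chi(g)u_\chi\bigr)$. The condition for $(u_1,u_2,u_3)$ to lie in the image of $RG_t$ is therefore that some $w\in R$ satisfies $w+\chi_1(g)u_1+\chi_2(g)u_2+\chi_3(g)u_3\equiv0 \pmod{4R}$ for every $g\in G$.

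\textbf{Step 3: congruences.} Taking differences of these four congruences gives $2(u_i+u_j)\equiv 0$, that is, $u_i\equiv -u_j \pmod{2R}$. Equivalently $u_i\equiv u_j \pmod 2$. Since $u_1+u_2+u_3+w\equiv 0$ and $w+u_1-u_2-u_3\equiv0 \pmod 4$, subtracting gives $2(u_2+u_3)\equiv0\pmod 4$ again, so the binding constraint is the $\bmod 2$ condition on differences together with solvability for $w$. One then checks that the triples in the image are exactly those of the form $u\cdot(\chi_1(g),\chi_2(g),\chi_3(g))$ with $u\in U(R)$ and $g\in G$. These are precisely the images of the trivial units $ug+(\Sigma)$, which gives the result.

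\textbf{Main obstacle.} The hardest part is this last verification, which requires excluding triples such as $(u,u,-u)$ (not of the form $\chi(g)$ up to a unit) and triples with genuinely different unit values, for example $(1,i,1)$ when $R=\mathbb Z[i]$, or $(1,\omega,\omega^2)$ when $R=\mathbb Z[\omega]$. The plan is a case analysis over the possible $U(R)$: $\{\pm1\}$ in general, $\langle i\rangle$ when $d=1$, and $\langle -\omega\rangle$ when $d=3$. In each case we use the explicit ideal $2R$: it is $(1+i)^2$ when $d=1$, and it is prime with residue field $\mathbb F_4$ when $d=3$.

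\begin{itemize}
\item For $d=3$, distinct sixth roots of unity that are not negatives of each other are distinct modulo $2$. So all $u_i$ agree up to sign, say $u_i=\epsilon_i u$ with $\epsilon_i=\pm1$.
\item For $d=1$, we have $i\equiv 1 \pmod{(1+i)}$ but $i\not\equiv \pm1\pmod 2$. So again $u_i=\epsilon_i u$.
\end{itemize}

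Finally, for a sign pattern $(\epsilon_1,\epsilon_2,\epsilon_3)$, the congruences force $w\equiv -u(\epsilon_1+\epsilon_2+\epsilon_3)$ and $\epsilon_1\epsilon_2\epsilon_3$-type parity conditions. Concretely, $4\mid u(\epsilon_1-\epsilon_2-\epsilon_3+\epsilon_1\epsilon_2\epsilon_3\cdot(\ldots))$, which one checks by listing the 8 sign patterns: exactly the 4 patterns $\pm(\chi_1(g),\chi_2(g),\chi_3(g))$ with product $+1$, times the overall unit $u$, survive. The other patterns would need a coefficient $\pm\frac{u}{2}\notin R$, since $2\nmid u$. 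This completes the identification with trivial units.
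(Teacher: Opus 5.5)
Your reduction is sound through Step 3 and the two case bullets. The units of $RG_t$ map into the triples $(u_1,u_2,u_3)\in U(R)^3$ with $u_1\equiv u_2\equiv u_3 \pmod{2R}$. This is the whole condition, since once it holds, $w=-(u_1+u_2+u_3)$ satisfies all four congruences mod $4R$. For each possible $U(R)$ it then forces $u_i=\epsilon_i u$ with $\epsilon_i=\pm1$.

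The final paragraph, however, is false as written. It is not true that only the four sign patterns of product $+1$ survive while the others ``would need a coefficient $\pm u/2$''. The triple $(-1,-1,-1)$ is the image of $-1\in RG$. More generally, $u(\epsilon_1,\epsilon_2,\epsilon_3)$ satisfies your own mod-$2$ criterion for every choice of signs, because $2u\equiv 0\pmod{2R}$. Likewise, $(u,u,-u)$ is not something to be excluded. It equals $(-u)(-1,-1,1)$, and $(-1,-1,1)$ has product $+1$, so it is $(\chi_1(g),\chi_2(g),\chi_3(g))$ for some $g$. Hence $(u,u,-u)$ is the image of the trivial unit $-ug$. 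So the ``main obstacle'' you set up rests on a miscomputation and, taken literally, contradicts your Step 3.

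The correct finish needs no mod-$4$ analysis at all. Since $\chi_3=\chi_1\chi_2$ and $g\mapsto(\chi_1(g),\chi_2(g))$ is a bijection $G\to\{\pm1\}^2$, the vectors $(\chi_1(g),\chi_2(g),\chi_3(g))$ are exactly the four sign vectors of product $+1$. Hence every sign vector equals $\epsilon\,(\chi_1(g),\chi_2(g),\chi_3(g))$ with $\epsilon=\epsilon_1\epsilon_2\epsilon_3$. Using $-1\in U(R)$, $u(\epsilon_1,\epsilon_2,\epsilon_3)$ is then the image of the trivial unit $\epsilon u g$. Your stated final set is therefore right, but because the product-$(-1)$ patterns are present and absorbed into the sign of $u$, not because they are ruled out.

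With this repair the argument is complete and follows essentially the paper's route: Fourier inversion into $R^3$ and congruences mod $4$. Organizing it via the criterion $u_1\equiv u_2\equiv u_3\pmod{2R}$ is cleaner than the paper's normalization $u_3=1$ followed by counting $8$, $16$, $24$ solutions.

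A small point in Step 2: adding $k\Sigma$ moves $w$ only by multiples of $4$, so $w$ is not ``any convenient value''. The condition you actually use, existence of some $w\in R$, is the correct one.
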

\begin{proof}
Let $C_{2}\times C_2=\langle f,g | f^2=g^2=1\rangle$. The idempotents are $$\eta_0=\frac{1+f+g+fg}{4}, \eta_1=\frac{1-f+g-fg}{4}, \eta_2=\frac{1+f-g-fg}{4}, \eta_3=\frac{1-f-g+fg}{4}.$$ Therefore, we decompose $KG$ as $$KG\cong K\eta_0\oplus K\eta_1\oplus K\eta_2\oplus K\eta_3,$$ for any number field $K$, as all idempotents are realized over $\mathbb Q$. Then its maximal order would be $$M_2\cong \mathcal{O}_K\eta_0\oplus \mathcal{O}_K\eta_1\oplus \mathcal{O} _K\eta_2\oplus \mathcal{O}_K\eta_3.$$ In the reduced (truncated) case $\eta_0$ would be congruent to zero, so $$M_{2_t}\cong \mathcal{O}_K\eta_1\oplus \mathcal{O} _K\eta_2\oplus \mathcal{O}_K\eta_3.$$ In the case $K=\mathbb Q, \mathbb Q(\sqrt{-d})$ ($d>0 \neq 1,3$ square free), the units in $\mathcal{O}_K$ are only $\pm1$. The amount of units in $M_{2_t}$ can only be $8$, which is the amount of trivial units by definition. All units are trivial in that group ring. 
\\ For $d=1,3$, we chase back a unit to $KG$. Let $u_i\in U(\mathcal{O}_K$). So the image $u_1\eta_1+u_2\eta_2+u_3\eta_3$ first pulled back to $M_2$ we note that any $r\in\mathcal{O}_K$ could have been multiplied with $\eta_0$. We also note we can factor $u_3$, so $$u_3(r'\eta_0+u_1'\eta_1+u_2'\eta_2+1\eta_3)$$ as $u_i'$ are units in $\mathcal{O}_K$ and $r'\in\mathcal{O}_K$. Now, pulled back to $M_2$ $u_3$ will be mapped to a trivial unit by definition, so it is just left to see what $r'\eta_0+u_1'\eta_1+u_2'\eta_2+1\eta_3$ will be pulled back to. That is, $$\frac{r'+u_1'+u_2'+1}{4}1_G+\frac{r'-u_2'+u_3'-1}{4}f+\frac{r'+u_2'-u_3'-1}{4}g+\frac{r'-u_2'-u_3'+1}{4}fg.$$  To be in $\mathcal{O}_KG$ each numerator must be divisible by $4$. Solving this system of equation in$\mod 4\mathbb Z[i],\mod 4\mathbb Z[\omega]$ leads to only 16, 24 possible units respectively which is the amount of trivial units.

\end{proof}
\begin{prop}
    $\mathbb Z[\omega]C_{3_t}$ has only trivial units. 
\end{prop}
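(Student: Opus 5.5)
The plan is to identify $\mathbb Z[\omega]C_{3_t}$ explicitly as a ring, list its units, and check that each is a trivial unit $ug+(\Sigma)$.

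\textbf{Step 1: identify the ring.} Write $C_3=\langle g\rangle$ and $R=\mathbb Z[\omega]$. Then $RC_3\cong R[x]/(x^3-1)$ and $\Sigma=1+x+x^2$, so
\[
RC_{3_t}\cong R[x]/(x^2+x+1).
\]
Since $\omega\in R$, the polynomial factors over $R$ as $x^2+x+1=(x-\omega)(x-\omega^2)$. Hence $RC_{3_t}$ is a subring of $R\times R$ via $x\mapsto(\omega,\omega^2)$, and this map is injective because the two roots are distinct. Its image is the fibre product
\[
\{(a,b)\in R\times R : a\equiv b \bmod (\omega-\omega^2)\}.
\]
The factor $\omega-\omega^2=\omega(1-\omega)$ is an associate of $\sqrt{-3}$. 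Since the prime $(1-\omega)$ has residue field $\mathbb F_3$, the image has index $3$ in $R\times R$. Concretely, $a+bx\mapsto(a+b\omega,\,a+b\omega^2)$, and one checks directly that the image is exactly the pairs congruent mod $(1-\omega)$.

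\textbf{Step 2: count the units.} The units of the fibre product are the pairs $(u,v)$ of units of $R$ with $u\equiv v \bmod (1-\omega)$. The units of $R$ are $\pm\omega^k$, six in all. Every $\omega^k$ is $\equiv 1$ modulo $1-\omega$, so $\pm\omega^k\equiv \pm1$, and $1\not\equiv -1$ because $3\nmid 2$. The congruence therefore only forces $u,v$ to have the same sign, which leaves $6\cdot 3=18$ units in $RC_{3_t}$.

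\textbf{Step 3: count the trivial units.} The trivial units are the classes $ug+(\Sigma)$ with $u\in U(R)$ and $g\in C_3$, giving at most $6\cdot 3=18$ elements. To see they are distinct, apply the embedding: $ug^j\mapsto(u\omega^j,u\omega^{2j})$. This image determines $u\omega^j$ and the ratio $\omega^{j}$, hence determines $j$ and $u$. So there are exactly 18 trivial units, these exhaust $U(RC_{3_t})$, and the proposition follows.

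\textbf{Main obstacle.} The delicate step is proving the image is the full congruence fibre product rather than something smaller. This matters because the count in Step 2 uses all pairs satisfying the congruence. I would handle it by the index computation: $\{1,x\}$ maps to a lattice of determinant $\omega^2-\omega$ over $R$, which matches the index of the fibre product.

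As an alternative for Step 2, I could compute the norm form $N(a+bx)=a^2-ab+b^2$ over $R$ and solve directly, but the fibre-product route is cleaner.
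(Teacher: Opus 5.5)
Your proof is correct. It shares the paper's setting but differs in the key counting step. Both of you embed $\mathbb Z[\omega]C_{3_t}$ into the maximal order $\mathbb Z[\omega]\times\mathbb Z[\omega]$: the paper does this via the idempotents $\eta_1,\eta_2$, which send the generator to $\omega^2$ and $\omega$ respectively, and you do it via the factorization $x^2+x+1=(x-\omega)(x-\omega^2)$.

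The paper never identifies the image of this embedding. It notes $TU(RC_{3_t})\le U(RC_{3_t})\le U(M_{3_t})$, with $18$ and $36$ elements at the two ends, so by Lagrange $|U(RC_{3_t})|\in\{18,36\}$. It then rules out $36$ with a single witness, $-\eta_1+\eta_2$, a pair of units of opposite signs that does not lie in the order. You instead determine the image exactly as the fibre product $\{(a,b): a\equiv b \bmod (1-\omega)\}$ and read off the units as the pairs of units of the same sign.

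The paper's route is shorter and needs only one membership check. Yours gives several things the paper's does not:
\begin{itemize}
\item an explicit description of $U(\mathbb Z[\omega]C_{3_t})$;
\item a description of the order itself as a pullback, in the spirit of the Milnor square in the introduction;
\item an explanation of why the paper's witness fails, since sign mismatch is exactly what the congruence forbids;
\item a check that the $18$ trivial units are distinct, which the paper asserts ``by definition'' without argument.
\end{itemize}

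The step you flag as the main obstacle is in fact immediate, so you do not need the index computation. Indeed, $(a+b\omega)-(a+b\omega^2)=b(\omega-\omega^2)$. Conversely, if $c-d=b(\omega-\omega^2)$, then taking $a=c-b\omega$ gives $a+bx$ as a preimage of $(c,d)$.
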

\begin{proof}
Denote $C_3=\langle f\rangle$. We first write the idempotents : $$\eta_0=\frac{1+f+f^2}{3}, \eta_1=\frac{1+\omega f+\omega^2f^2}{3}, \eta_2=\frac{1+\omega^2f+\omega f^2}{3}.$$
    Now, write the group algebra decomposition. $$\mathbb{Q}(\omega)C_3\cong \mathbb Q(\omega)\eta_0 \oplus \mathbb Q(\omega)\eta_1 \oplus \mathbb Q(\omega)\eta_2.$$ 
    the reduced (truncated) maximal order is, $$M_{3_t}=\mathbb{Z}[\omega]\eta_1\oplus \mathbb{Z}[\omega]\eta_2.$$ 
    There are $18$ trivial units in $\mathbb Z[\omega]C_{3_t}$ by definition. In the reduced (truncated) maximal order there will be $36$ by counting. Since the units form a subgroup of the unit group of the maximal order, either all $36$ units in the maximal order are in $\mathbb Z[\omega]C_{3_t}$ or only $18$. We provide one example of a unit in the maximal order that is not in $\mathbb Z[\omega]C_{3_t}$. $-\eta_1+\eta_2$ pulled back is $\frac{-1-\omega f-\omega^2f^2}{3}+\frac{1+\omega^2+\omega f^2}{3}=\frac{(-\omega+\omega^2)f+(\omega-\omega^2)f^2}{3}.$ But $\omega-\omega^2$ is not divisible by $3$ in $\mathbb Z[\omega]$. That unit is not in $\mathbb Z[\omega]C_{3_t}$.   Therefore, $\mathbb Z[\omega]C_{3_t}$ has only trivial units.    
\end{proof}    
\begin{prop}
     $\mathbb Z[i]C_{4_t}$ has only trivial units. 
\end{prop}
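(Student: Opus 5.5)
The plan is to follow the template of the proofs of Propositions 7 and 8: decompose the algebra, pass to the truncated maximal order, count its units, and then show that every unit beyond the trivial ones fails to lie in $\mathbb Z[i]C_{4_t}$. Write $C_4=\langle f\rangle$. Since $i\in\mathbb Q(i)$, all characters of $C_4$ are realized over $K=\mathbb Q(i)$. The primitive idempotents are
$$\eta_k=\frac{1}{4}\sum_{j=0}^{3} i^{-jk}f^j,\qquad k=0,1,2,3,$$
and they satisfy $f\eta_k=i^k\eta_k$. Hence $\mathbb Q(i)C_4\cong\bigoplus_{k=0}^3\mathbb Q(i)\eta_k$. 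The maximal order is $\bigoplus_k\mathbb Z[i]\eta_k$, and the truncated maximal order is $M_{4_t}=\mathbb Z[i]\eta_1\oplus\mathbb Z[i]\eta_2\oplus\mathbb Z[i]\eta_3$. Because $U(\mathbb Z[i])=\langle i\rangle$, the group $U(M_{4_t})$ is $\langle i\rangle^3$, of order $64$. The trivial units $uf^j+(\Sigma)$, with $u\in\langle i\rangle$ and $0\le j\le 3$, are $16$ distinct elements, since no nonzero element of $\mathbb Z[i]C_4$ of the form $uf^j-u'f^{j'}$ lies in $(\Sigma)$. In the idempotent coordinates they are exactly the triples $u\,(i^j,(-1)^j,(-i)^j)$.

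Next I use the group structure as in Proposition 8. The group $U(\mathbb Z[i]C_{4_t})$ sits between the trivial units $T$ (order $16$) and $U(M_{4_t})$ (order $64$), so it corresponds to a subgroup of the quotient $U(M_{4_t})/T$, which has order $4$. Dividing out the scalar $u$ and the powers of $f$ normalizes the coordinates to the form $(1,1,\epsilon)$ with $\epsilon\in\langle i\rangle$. Concretely, one first sets the $\eta_1$-coordinate to $1$ using the scalar $u$, and then uses $f^j$, which changes the ratio of the $\eta_2$- to $\eta_1$-coordinates by $(-i)^j$, to set the $\eta_2$-coordinate to $1$. So the cosets are represented by $\epsilon\eta_3+\eta_1+\eta_2$ for $\epsilon\in\{1,i,-1,-i\}$, and it suffices to show that the classes with $\epsilon=-1$ and $\epsilon=\pm i$ do not lie in $\mathbb Z[i]C_{4_t}$. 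In fact it is enough to rule out $\epsilon=-1$ and $\epsilon=i$: the quotient is cyclic of order $4$, generated by the class of $\epsilon=i$, so excluding the element of order $2$ (the class of $\epsilon=-1$) already forces the subgroup to be trivial.

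To test membership, I pull the element back to $\mathbb Z[i]C_4$, exactly as in Proposition 7. The class $\epsilon\eta_3+\eta_1+\eta_2$ lies in $\mathbb Z[i]C_{4_t}$ if and only if there is some $r\in\mathbb Z[i]$ with $r\eta_0+\eta_1+\eta_2+\epsilon\eta_3\in\mathbb Z[i]C_4$. The coefficient of $f^j$ in this element is
$$\frac{r+i^{-j}+(-1)^j+\epsilon\, i^{j}}{4}.$$
Subtracting the coefficients for two values of $j$ eliminates $r$, so a necessary condition is that all pairwise differences of $s_j=i^{-j}+(-1)^j+\epsilon i^j$ be divisible by $4$ in $\mathbb Z[i]$. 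For $\epsilon=-1$ we get $s_0=1$ and $s_1=-i-1-i=-1-2i$, so $s_0-s_1=2+2i$, which is not divisible by $4$. The $\epsilon=\pm i$ cases are checked the same way: a single pairwise difference that is not $0$ modulo $4\mathbb Z[i]$ suffices in each case.

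The main obstacle is getting the normalization exactly right. One has to confirm that $T$ maps onto the stated $16$ triples and that the quotient has the representatives claimed; an error there would invalidate the coset argument. If the cyclic-quotient shortcut turns out to be shaky, the fallback is to run the divisibility check on all $16$ normalized pairs $(u_1',u_2')$ with $u_3'=1$, as in the proof of Proposition 6. That fallback is a finite computation modulo $4\mathbb Z[i]$, and it should leave exactly the $4$ normalized pairs that come from powers of $f$.
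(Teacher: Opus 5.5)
Your proof is correct and follows the paper's method: split $\mathbb Q(i)C_4$ into copies of $\mathbb Q(i)$, pass to the truncated maximal order, and pull back with a free $\eta_0$-coefficient $r$ to get congruences modulo $4\mathbb Z[i]$. The only difference is that you use the cyclicity of $U(M_{4_t})/T\cong\mathbb Z/4$ to reduce to the single coset $\epsilon=-1$ (the same index trick the paper uses for $\mathbb Z[\omega]C_{3_t}$), whereas the paper solves the full system for all normalized pairs $(u_1',u_2')$ and counts the $4$ survivors. There is one harmless slip: multiplying by $f^j$ changes the ratio of the $\eta_2$- to the $\eta_1$-coordinate by $(-1)^j/i^j=i^j$, not $(-i)^j$, which does not affect your normalization since $j$ runs over all residues mod $4$.
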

\begin{proof}
Similarly, denote $C_4=\langle f\rangle$. We first write the idempotents : $$\eta_0=\frac{1+f+f^2+f^4}{4}, \eta_1=\frac{1-f+f^2-f^3}{4}, \eta_2=\frac{1+if-f^2-if^3}{4}, \eta_3=\frac{1-if-f^2+if^3}{4}.$$
Now, write the group algebra decomposition. $$\mathbb{Q}(i)C_4\cong \mathbb Q(i)\eta_0 \oplus \mathbb Q(i)\eta_1 \oplus \mathbb Q(i)\eta_2\oplus \mathbb Q(i)\eta_3.$$ Then, the reduced maximal order, $$M_{4_t}\cong \mathbb Z[i]\eta_1 \oplus\mathbb Z[i]\eta_2\oplus \mathbb Z[i]\eta_3.$$ The units can be described as $u_1\eta_1+u_2\eta_2+u_3\eta_4=u_3(u_1'\eta_1+u_2'\eta_2+\eta_3)$ with $u_i\in U(\mathbb Z[i])$, and we need to show that the pre-image of $u_1'\eta_1+u_2'\eta_2+\eta_3$ is in $\mathbb Z[i]C_{4_t}$ only $4$ times. To reach an overall $16$ units, total amount of trivial. When pulled back, we get $$\frac{r+u_1+u_2+1}{4}+\frac{r-u_1+iu_3-i}{4}+\frac{r+u_1-u_2-1}{4}+\frac{r-u_1-iu_2+i}{4}.$$ Where $r\in\mathbb Z[i]$. This is matter of solving:
\begin{align}
r+u_1+u_2+1&=0 \mod 4\mathbb Z[i] \\
r-u_1+iu_2-i&= 0  \mod 4\mathbb Z[i]  \\
r+u_1-iu_2-1&= 0  \mod 4\mathbb Z[i]  \\
r-u_1-iu_2+i&= 0  \mod 4\mathbb Z[i] 
\end{align}

Subtracting (2) and (4), $2iu_2-2i=0 \mod 4\mathbb Z[i]$. Multiplying by $(2i)^{-1}$ we are left with $u_2-1=0 \mod 2\mathbb Z[i]$, $u_2=1,-1$. That leaves us with $32$ options. We can narrow it down further. Consider subtracting equations (1) and (2). Then $2u_1+(1-i)u_2+1+i=0 \mod 4\mathbb Z[i]$. Also, we can write it as $-i(1+i)^2u_1-i(1+i)u_2+(1+i)=0 \mod (1+i)^4\mathbb Z[i]$. Which we can divide by $(1+i)$ to get,  $-i(1+i)u_1-iu_2+1=0 \mod (1+i)^3\mathbb Z[i]$. 

Case 1: $u_2=1$ then ,$-i(1+i)u_1-i+1=0 \mod (1+i)^3\mathbb Z[i]$ but then we can divide by another $(1+i)$ to get $-iu_1-i=0 \mod (1+i)^2\mathbb Z[i]$, so $u_1=1 \mod 2\mathbb Z[i]$ when $u_2=1$. 

Case 2: $u_2=-1$ then $-i(1+i)u_1+i+1=0 \mod (1+i)^3$. Again, divide by $(1+i)$ so now $u_1=i \mod 2\mathbb Z[i]$ when $u_2=-1$. Now we have $4$ choices left. With $u_3$, that leads to $16$ units that can be in $\mathbb Z[i]C_{4_t}$ which counts for the trivial units only.
\end{proof}
\begin{prop}
        $\mathbb ZQ_{8_t}$ has only trivial units. 
    \end{prop}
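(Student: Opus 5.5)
We follow the pattern of the previous propositions: decompose $\mathbb QQ_8$, identify the unit group of a maximal order of $\mathbb QQ_{8_t}$, observe that it is finite, and then count which of its units lie in $\mathbb ZQ_{8_t}$. Write $Q_8=\langle x,y\mid x^4=1,\ x^2=y^2,\ xy=y^{-1}x\rangle$. The Wedderburn decomposition is
$$\mathbb QQ_8\cong \mathbb Q\eta_0\oplus\mathbb Q\eta_1\oplus\mathbb Q\eta_2\oplus\mathbb Q\eta_3\oplus \mathbb H_{\mathbb Q}.$$
Here $\eta_0,\eta_1,\eta_2,\eta_3$ are the idempotents of the four linear characters of $Q_8/\langle x^2\rangle\cong C_2\times C_2$. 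The factor $\mathbb H_{\mathbb Q}$ is the rational quaternion algebra, cut out by $e=\frac{1-x^2}{2}$, with $x\mapsto i$ and $y\mapsto j$. Passing to the truncation kills $\eta_0$, so
$$\mathbb QQ_{8_t}\cong \mathbb Q\eta_1\oplus\mathbb Q\eta_2\oplus\mathbb Q\eta_3\oplus\mathbb H_{\mathbb Q}.$$
A maximal order containing $\mathbb ZQ_{8_t}$ is then $M_t\cong\mathbb Z^3\oplus\Lambda$, where $\Lambda$ is the Hurwitz order. Since $\mathbb H_{\mathbb Q}$ is totally definite, $U(\Lambda)$ is finite of order $24$, and therefore $|U(M_t)|=2^3\cdot 24=192$. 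In particular $U(\mathbb ZQ_{8_t})$ is a finite subgroup of $U(M_t)$, which is consistent with the rank-zero argument of Proposition 3.

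Next we count the trivial units. The trivial units of $\mathbb ZQ_{8_t}$ are the images of $\pm g$ with $g\in Q_8$. These $16$ elements are distinct modulo $(\Sigma)$, because $\pm g-\pm h$ is never a multiple of $\Sigma$ when $|Q_8|>2$. So the goal is to show $|U(\mathbb ZQ_{8_t})|=16$.

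Rather than solving congruences over the whole of $M_t$, we use the pullback square coming from $x^2$. Since $\mathbb ZQ_8/(1+x^2)$ is the Lipschitz order $L=\mathbb Z\langle i,j\rangle$ and $\mathbb ZQ_8/(1-x^2)\cong\mathbb Z(C_2\times C_2)$, $\mathbb ZQ_8$ is the pullback of
$$\mathbb Z(C_2\times C_2)\rightarrow \mathbb F_2(C_2\times C_2)\leftarrow L.$$
Truncating replaces $\mathbb Z(C_2\times C_2)$ by $\mathbb Z(C_2\times C_2)_t$ and the bottom ring by its image. The image of $\Sigma_{Q_8}$ is $2\Sigma_{C_2\times C_2}$, so the relevant quotient is $\mathbb Z(C_2\times C_2)/(2\Sigma)$, and we must check how this interacts with the square.

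A unit of $\mathbb ZQ_{8_t}$ then maps to a pair $(\alpha,\beta)$. The element $\beta\in U(L)=\{\pm1,\pm i,\pm j,\pm k\}$ has order $8$, since the Hurwitz units $\frac{\pm1\pm i\pm j\pm k}{2}$ are not in $L$. The element $\alpha$ is a unit in the truncated $\mathbb Z(C_2\times C_2)$-side, which by Proposition 6 (with $U(\mathbb Z)$ finite) has only trivial units. Finally, $\alpha$ and $\beta$ must agree in the $\mathbb F_2$-quotient.

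Counting compatible pairs should give exactly $16$: each $\beta=\pm g$ forces $\alpha$ to reduce to the image of $g$ mod $2$, leaving only a sign choice. Equivalently, one can pull back an element $u_1\eta_1+u_2\eta_2+u_3\eta_3+q$ of $U(M_t)$, with $q\in U(\Lambda)$, and solve divisibility by $8$ (and by $2$ in the $e$-part), as in Propositions 7 and 8.

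The main obstacle is the truncation step. Because $\Sigma$ lives only in the $\eta_0$ component, the truncated ring is not literally the pullback of the truncated pieces. The free coefficient $r$ on $\eta_0$, used in Propositions 6 and 8, has to be carried through the congruences mod $8$. We will handle this exactly as before: allow an arbitrary $r\in\mathbb Z$ on $\eta_0$, factor out a trivial unit $\pm g$ to normalize, and show that the remaining coefficient conditions force $q=\pm1$ and $u_1=u_2=u_3=1$ up to the already-counted trivial units. The Hurwitz units $\frac{\pm1\pm i\pm j\pm k}{2}$ are excluded because their pullback has coefficients with denominator $2$ on the $e$-part that no choice of $r$ can clear.
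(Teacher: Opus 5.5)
\textbf{Verdict: genuine gap.} The pullback idea is good, but the proposal stops exactly where the proof has to happen. Counting compatible pairs $(\alpha,\beta)$ bounds $|U(\mathbb ZQ_{8_t})|$ only if a unit is determined by its pair, i.e.\ only if $\mathbb ZQ_{8_t}\to\mathbb Z(C_2\times C_2)_t\times L$ is injective. You never prove this. You assert instead that the truncated ring is not literally a pullback, and then fall back on the mod-$8$ congruence computation with a free $r$ on $\eta_0$, which you only announce (``should give exactly $16$''). So, as written, no upper bound on the unit group is established.

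The appeal to Proposition 6 is also wrong for the ring you name just before it. In $\mathbb Z(C_2\times C_2)/(2\Sigma)$ the element $1+\Sigma$ is a unit: $\Sigma^2=4\Sigma$ gives $(1+\Sigma)^2=1+6\Sigma$, and $1+\Sigma$ is not $\pm h$ modulo $2\Sigma$. With that ring, the units $\pm h+\Sigma$ are excluded only by the mod-$2$ compatibility. This works because $\overline{h+\Sigma}$ is a sum of three group elements, while every element of $U(L)$ reduces to a single one.

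\textbf{How to close the gap.} The missing step is short, and your worry is unfounded. Write $V=C_2\times C_2$.
\begin{itemize}
\item Suppose $a\in\mathbb ZQ_8$ dies in $L$. Since $x^2$ is central, $a=(1+x^2)b$, and the image of $a$ in $\mathbb ZV$ is $2\bar b$.
\item If $2\bar b$ lies in $\mathbb Z\Sigma_V$, then all coefficients of $\bar b$ are equal. So $b\in c(1+x+y+xy)+(1-x^2)\mathbb ZQ_8$, and $a=c(1+x^2)(1+x+y+xy)=c\Sigma$.
\item Hence $\mathbb ZQ_{8_t}$ embeds in $\mathbb ZV_t\times L$. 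It is in fact the fibre product over $\mathbb F_2V/(\bar\Sigma_V)$, since a lift can be adjusted by $\Sigma_V$.
\item Its units therefore map injectively to pairs $(\pm h,\beta)$ that agree in $\mathbb F_2V/(\bar\Sigma_V)$. Here $h\in V$ by Proposition 6, and $\beta\in\{\pm1,\pm i,\pm j,\pm k\}$.
\item Two group elements agree in $\mathbb F_2V/(\bar\Sigma_V)$ only if they are equal, because $\bar h+\bar g$ has zero or two terms and $\bar\Sigma_V$ has four.
\item So there are at most $4\cdot2\cdot2=16$ such pairs, and the $16$ distinct trivial units already fill them.
\end{itemize}

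\textbf{Comparison with the paper.} Completed this way, your argument differs from the paper's route. The paper solves the mod-$8$ congruences over $\mathbb Z^{3}\oplus\mathbb H_{\mathbb Z}$ for the $64$ candidates, working out $q=\pm1$ explicitly and the other quaternion units by ``similar work''. Your route replaces all of that with a single parity check. The Hurwitz order and the count $192$ play no role, since the image of $\mathbb ZQ_8$ in $\mathbb H_{\mathbb Q}$ is already the Lipschitz order.
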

\begin{proof}
$$Q_8=\langle x,y | x^4=e,  x^2=y^2, z=xy=y^{-1}x\rangle.$$ The idempotents are $$\eta_0=\frac{e+\bar{e}+x+\bar{x}+y+\bar{y}+z+\bar{z}}{8}, \eta_1=\frac{e+\bar{e}+x+\bar{x}-y-\bar{y}-z-\bar{z}}{8}, $$$$\eta_2=\frac{e+\bar{e}-x-\bar{x}+y+\bar{y}-z-\bar{z}}{8}, \eta_3=\frac{e+\bar{e}-x-\bar{x}-y-\bar{y}+z+\bar{z}}{8}, \eta_4=\frac{4e-4\bar{e}}{8}.$$
The group algebra decomposition is: $$\mathbb Q Q_{8}=\mathbb Q\eta_0 \oplus \mathbb Q\eta_1\oplus \mathbb Q\eta_2 \oplus \mathbb Q\eta_3 \oplus \mathbb QG\eta_4.$$ Denote the Hamiltonian division algebra $$\mathbb H_{\mathbb Q}=\{a1+bi+cj+dk |a,b,c,d\in\mathbb Q,   i^2=j^2=k^2=-1\}.$$ Note the isomorphism, $\mathbb QG\eta_4 \cong \mathbb H_{\mathbb Q}$ via $\eta_4\mapsto 1, x\mapsto i, y\mapsto j, z\mapsto k$. Then, a maximal order $M_8\cong \mathbb Z^{\oplus4} \oplus \mathbb H_{\mathbb Z}$, and a reduced maximal order will be $$M_{8_t}\cong \mathbb Z^{\oplus3} \oplus \mathbb H_{\mathbb Z}.$$ Counting the units in this structure we have $64$ units. We show that under the map only $16$ are in $\mathbb ZQ_{8_t}$. Consider, $r\in\mathbb Z, u_1,u_2,u_3\in U(\mathbb Z), u_4\in U(\mathbb H_{\mathbb Z})$. Since $u_4$ will be pulled back to elements in $Q_8$, we get a different expression depending on $u_4$ when pulled back to $\mathbb QG$. We consider each case. For $u_4=e, \bar{e}$. The expression is pulled back to: 
\begin{equation*}
\begin{split}
    \frac{1}{8}\big((r+u_1+u_2+u_3+4u_4)e+(r+u_1+u_2+u_3-4u_4)\bar{e}+(r+u_1-u_2-u_3)x+(r+u_1-u_2-u_3)\bar{x}\\+(r-u_1+u_2-u_3)y+(r-u_1+u_2-u_3)\bar{y}+(r-u_1-u_2+u_3)z+(r-u_1-u_2+u_3)\bar{z}\big)        
\end{split}
\end{equation*}
    We solve the following system of equation:
    \begin{align*}
r+u_1+u_2+u_3+4u_4&=0 \mod 8 \\
r+u_1+u_2+u_3-4u_4&= 0  \mod 8  \\
r+u_1-u_2-u_3&= 0  \mod 8  \\
r-u_1+u_2-u_3&= 0  \mod 8 \\
r-u_1-u_2+u_3&= 0  \mod 8
\end{align*}

Adding all equations, we are left with $5r+u_1+u_2+u_3=0 \mod 8$ .We can strict $r\in [0,7]$ as we are working in module $8$. Knowing $u_i=\pm 1$ the values of the equations is in the range $[-3,38]$. Since we need it to equal $0 \mod 8$ we are now only left with five possible answers $\{0,8,16,24,32\}$. $u_1+u_2+u_3$ can only add up to $-3,-1,1,3$ (because they are all $\pm 1$) so we can never get $0$. For The answers to be $8,32$ we choose $r=u_1=u_2,u_3=1$ or  $r=7 , u_1=u_2=u_3=-1$. For the answer to be $16$, $r=3$, $u_1+u_2+u_3=1$ there's $3$ such possibilities, but (without loss of generality) choose $u_1=u_2=1, u_3=-1$ then some equations are not satisfied.
Similarly, for the answer to be $24$, $r=5, u_1+u_2+u_3=-1$ and again, if we choose $u_1=u_2=-1, u_3=1$ then some equations are not satisfied.

Overall when $u_4=e, \bar{e}$ there are four possible units in $\mathbb ZQ_8$.

Similar work is done when solving the other cases where $u_4$ is the other elements of $Q_8$.

We add all possible units and reach $16$ which already counts for the trivial units. So there are only trivial units in $\mathbb ZQ_{8_t}$. 
\end{proof}
    
    Consider the isomorphism, $\theta$ $$K(G\times C_n) \cong KG\otimes_K KC_n= KG\otimes_K K\eta_0 \oplus KG\otimes_K K\eta_1\oplus ... \oplus KG\otimes_K K\eta_{l}$$ and the projection, $\pi$ of  $$KG\otimes_K K\eta_0 \oplus KG\otimes_K K\eta_1\oplus ... \oplus KG\otimes_K K\eta_{l}$$ to $$KG_t\otimes_K K\eta_0 \oplus KG\otimes_K K\eta_1\oplus ... \oplus KG\otimes_K K\eta_{l}.$$ 
    \[
\begin{tikzcd}[column sep=small]
K(G\times C_n) \arrow{r}{\theta}  \arrow{rd}{\pi\theta} 
  & KG\otimes KC_n \arrow{d}{\pi} \\
    & KG_t\otimes K\eta_0 \oplus KG \otimes K\eta_1 \oplus ... \oplus KG \otimes K\eta_{l}
\end{tikzcd}
\]Where $\eta_i$ are orthogonal idempotents of $KC_n$, with $\eta_0=\frac{\sum_{f\in C_n}f}{n}$. The map, $\pi\theta$ from $K(G\times C_n)$ to $KG_t\otimes_K K\eta_0 \oplus... \oplus KG\otimes_K K\eta_l$ is surjective with $ker(\pi\theta)=\Sigma$. Therefore, we have the isomorphism, $$K(G\times C_n)_t\cong KG_t\otimes_K K\eta_0 \oplus KG\otimes_K K\eta_1\oplus ... \oplus KG\otimes_K K\eta_l.$$ Let $N=\mathcal{O}_K G_t\otimes_{\mathcal{O}_K} \mathcal{O}_K\eta_0 \oplus \mathcal{O}_KG\otimes_{\mathcal{O}_K} \mathcal{O}_K\eta_1\oplus ... \oplus \mathcal{O}_KG\otimes_{\mathcal{O}_K} \mathcal{O}_K\eta_l$ be an order in $KG_t\otimes_K K\eta_0 \oplus KG\otimes_K K\eta_1\oplus ... \oplus KG\otimes_K K\eta_l$. \\ Unit in $\mathcal{O}_K(G\times C_n)_t$ are the pre-image of units in $N$ that are in $\mathcal{O}_K(G\times C_n)_t$. That is , $U(\mathcal{O}_K(G\times C_n)_t=\mathcal{O}_K(G\times C_n)_t \cap (\pi\theta)^{-1}(N)$ .
    \begin{prop}
        Let $|G|>2$. If $RG$ and $RG_t$ has only trivial units then $R(G\times C_2)_t$ has only trivial units. 
    \end{prop}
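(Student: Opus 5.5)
We use the decomposition just established with $n=2$. Then $KC_2=K\eta_0\oplus K\eta_1$ with $\eta_0=\frac{1+c}{2}$ and $\eta_1=\frac{1-c}{2}$, where $C_2=\langle c\rangle$. This gives
$$K(G\times C_2)_t\cong KG_t\otimes_K K\eta_0\oplus KG\otimes_K K\eta_1\cong KG_t\oplus KG.$$
Explicitly, the image of $\alpha=a+bc$ with $a,b\in RG$ is the pair $(\overline{a+b},\,a-b)$. Here the first coordinate is read in $RG_t$, and the second in $RG$ via $c\mapsto -1$. A unit $\alpha$ of $R(G\times C_2)_t$ maps to a unit of the order $N=RG_t\oplus RG$. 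So $\overline{a+b}\in U(RG_t)=TU(RG_t)$, and $a-b\in U(RG)=TU(RG)$. Hence we may write $a+b\equiv ug \pmod{(\Sigma_G)}$ and $a-b=vh$, with $u,v\in U(R)$ and $g,h\in G$.

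Next I would pull back. We have $2a=ug+vh+k\Sigma_G$ and $2b=ug-vh+k\Sigma_G$ for some $k\in R$. We also have to account for the fact that $\alpha$ is only defined modulo $\Sigma_{G\times C_2}=\Sigma_G(1+c)$. Multiplying out, $2\alpha$ is congruent modulo $(\Sigma_{G\times C_2})$ to
$$ug(1+c)+vh(1-c)+k\Sigma_G(1+c),$$
and the last term is $k\Sigma_{G\times C_2}$, which is zero in the truncated ring. So the whole problem reduces to one question: for which $u,v,g,h$ and which multiple $m\Sigma_{G\times C_2}$ does
$$ug(1+c)+vh(1-c)+m\Sigma_{G\times C_2}$$
have every coefficient in $2R$?

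The key step is a coefficient comparison, and this is where $|G|>2$ is used. Suppose $g\neq h$. Since $|G|\ge 3$, there is an element $x\in G$ different from both $g$ and $h$. The coefficients of $x$ and $xc$ force $m\in 2R$. The coefficient of $g$ then forces $u\in 2R$. But $u$ is a unit, so this is impossible, because $2$ is not a unit in $R$. Hence $g=h$.

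With $g=h$ and $m\in 2R$, the coefficients at $g$ and $gc$ give $u+v\in 2R$ and $u-v\in 2R$. We then get
$$\alpha\equiv \frac{u+v}{2}\,g+\frac{u-v}{2}\,gc \pmod{(\Sigma)}.$$
It remains to see that such an $\alpha$ is trivial. Note that $(u\eta_0+v\eta_1)$ is a unit of $RC_2$; it lies in $RC_2$ exactly because $u\equiv v \bmod 2$. Then $\alpha=g\,(u\eta_0+v\eta_1)$, so the element lies in $g\cdot U(RC_2)$. By the earlier description of $U(RC_2)$, every unit of $RC_2$ is almost trivial, not necessarily trivial. So the main obstacle is to show that this forces $u=\pm v$, that is, $\alpha=u'gc^{\epsilon}$.

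What I would try first at that point is to use the hypothesis on $RG_t$ more sharply. Consider the image of $\alpha$ in the first factor $RG_t\otimes R\eta_0$, which is $ug$. Also use the freedom to add $\Sigma_{G\times C_2}$. Alternatively, exploit that $\alpha$ must remain a unit after reducing with a fixed $x\neq g$ in $G$: the coefficients there are $0$ modulo $\Sigma$. Then compare units of $U(R)$ modulo $2$ to reduce to $v=\pm u$.

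If this fails in general, I would restrict to the cases actually covered by the hypotheses, using Theorem 3 (Herman and Li) and the classification above. In those cases, $RG$ having only trivial units already forces $U(R)$ to be finite, $U(R)=\mu(R)$. Then the conditions $u\equiv v \bmod 2$ and $u,v\in\mu(R)$ can be checked directly to give $v=\pm u$, since $u-v\in 2R$ for distinct roots of unity only occurs when $v=-u$. This yields $\alpha=ug$ or $\alpha=ugc$, a trivial unit.
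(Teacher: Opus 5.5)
Your proposal is correct and follows the paper's argument: the $C_2$-idempotent splitting $R(G\times C_2)_t\hookrightarrow RG_t\oplus RG$, then the mod-$2R$ congruence, then a coefficient comparison at a third element of $G$ (where $|G|>2$ is used) forcing $g=h$ and $u\equiv v \pmod{2R}$, and finally $u=\pm v$ because the hypotheses force $U(R)$ to be finite, which is what the paper's terse ``for all possible $R$'' means. Only your fallback closes the last step: $g(u\eta_0+v\eta_1)$ is a genuine unit whenever $u\equiv v \pmod{2R}$, so no sharper use of the $RG_t$ or $RG$ factors can rule it out, and the finiteness of $U(R)$ (from Higman and Herman--Li, or a unit-rank count) is exactly the input needed.
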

\begin{proof}
    We use the map above, when $n=2$, denote $C_2=\langle f\rangle$, and any elements of $G$ are $g$. $N=\mathcal{O}_K G_t\otimes_{\mathcal{O}_K} \mathcal{O}_K\eta_0 \oplus \mathcal{O}_KG\otimes_{\mathcal{O}_K} \mathcal{O}_K\eta_1$. 
  
  Let $x\in R(G\times C_{2})_{_t}$. Then $\pi\theta(x)=(\sum_g \alpha_gg+n\sum) \otimes \eta_0 +\sum_g \beta_gg\otimes \eta_1$ where $n,\alpha_g,\beta_g \in R$. If we pull this back to $K(G\times C_{2})_{_t}$ we see $$x=\frac{1}{2}(\sum(\alpha_g+\beta_g g)+n\sum +\sum(\alpha_g-\beta_g gf)).$$ That means elements in $R(G\times C_2)$ must satisfy the condition that $\alpha_g+n=\beta_g$ mod $2R$ for all $g\in G$. So units in $R(G\times C_{2})_{_t}$ must satisfy that condition. 
  
  $U(N)=(ug+m\sum)\otimes \eta_0+vh\otimes \eta_1=vh(u'g'+m\sum\otimes \eta_0+1_G\otimes \eta_1)$. $vh$ is a trivial unit in $R(G\times C_{2})_{_t}$ so we are left to verify when $u'g'+m\sum\otimes \eta_0+1_G\otimes \eta_1$ satisfy the condition above. That is when, $u'g'+m\sum=1_G$ mod $2RG$.

  If,  $m=2k+1$, then $u'g'+m\sum=1_G$ mod $2RG$. If $g'\neq 1_G$, So comparing coefficients of $G$ tells us, $u'+m=0$, $m=0$, $m=1$ mod $2R$. That is a contradiction, when $|G|>2$.
  
  When, $m=2k$. Then this is the same as $u'g'=1_G$ mod $2RG$. That force $g'=1_G$ and $u'=1$ mod $2R$. $u'=1$ mod $2R$ implies $u'=\pm 1$ (For all possible $R$). We now can write $vh(\pm 1\otimes \eta_0 +1\otimes \eta_1)$. This is simply $vh$ or $-vhf$. Those are trivial units by definition. 
  \end{proof}
\begin{prop}
            Let $|G|>2$, $R=\mathbb Z[\omega]$. If $RG$ and $RG_t$ has only trivial units then $R(G\times C_3)_t$ has only trivial units. 
\end{prop}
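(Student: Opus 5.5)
The plan follows the proof of the preceding proposition (the $C_2$ case), now with $n=3$ and $R=\mathbb Z[\omega]$. Write $C_3=\langle f\rangle$. Since $\omega\in R$, the idempotents of $KC_3$ are $\eta_0,\eta_1,\eta_2$ as in the proof for $\mathbb Z[\omega]C_{3_t}$. The map $\pi\theta$ then identifies $K(G\times C_3)_t$ with $KG_t\otimes\eta_0\oplus KG\otimes\eta_1\oplus KG\otimes\eta_2$. We work inside the order $N=RG_t\otimes\eta_0\oplus RG\otimes\eta_1\oplus RG\otimes\eta_2$.

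\textbf{Step 1: the image of $R(G\times C_3)_t$ in $N$.} Write an element of $R(G\times C_3)_t$ as $x=x_0+x_1f+x_2f^2$ with $x_i\in RG$, modulo $\Sigma$. Its image has components $a_j=x_0+\omega^{j}x_1+\omega^{2j}x_2$ (or $\omega^{-j}$, depending on the convention) on $\eta_j$. Conversely, a triple $(a_0+m\Sigma_G,a_1,a_2)$ pulls back to $x_k=\frac13\sum_j \omega^{-jk}a_j$. This lies in $RG$ for a suitable lift of $a_0$ exactly when congruence conditions modulo $3R$ hold. Because $3=-\omega^2(1-\omega)^2$, I will phrase these conditions modulo powers of $\lambda=1-\omega$. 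From $a_j\equiv x_0+x_1+x_2 \pmod{\lambda}$ we get the first condition $a_0\equiv a_1\equiv a_2 \pmod{\lambda RG}$. A finer condition modulo $\lambda^2$ should relate $a_1-a_0$ and $a_2-a_0$. I will compute these conditions once, coefficientwise in $g\in G$, remembering that $a_0$ is only defined up to $m\Sigma_G$.

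\textbf{Step 2: reduce to a normalized unit.} By hypothesis $U(RG_t)$ and $U(RG)$ are trivial. So a unit of $N$ has the form
\[
(u_0g_0+m\Sigma_G)\otimes\eta_0+u_1g_1\otimes\eta_1+u_2g_2\otimes\eta_2 .
\]
Following the $C_2$ proof, I factor out the trivial unit $u_1g_1$, whose image has all three components equal to $u_1g_1$ modulo $\Sigma$. This reduces to units of the form $(u_0'g_0'+m'\Sigma)\otimes\eta_0+1\otimes\eta_1+u_2'g_2'\otimes\eta_2$. The $\lambda$-congruence from Step 1 forces $u_2'g_2'\equiv 1 \pmod{\lambda RG}$, which gives $g_2'=1$ and $u_2'\equiv1 \pmod\lambda$. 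Similarly $u_0'g_0'+m'\Sigma\equiv 1 \pmod{\lambda RG}$. Since $|G|>2$, comparing coefficients (exactly as in the $C_2$ case, where $u'+m,\,m,\,m+1$ led to a contradiction) forces $\lambda\mid m'$, $g_0'=1$ and $u_0'\equiv1 \pmod\lambda$. This is where $|G|>2$ is used: it guarantees two distinct elements of $G$ with coefficient $m'$ alone, besides the identity, when $g_0'\neq 1$. Note that $U(\mathbb Z[\omega])=\{\pm\omega^k\}$ and $\mathbb Z[\omega]/\lambda\cong\mathbb F_3$, so $u\equiv1 \pmod\lambda$ means $u\in\{1,\omega,\omega^2\}$.

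\textbf{Step 3: finish with the $\lambda^2$-level condition (main obstacle).} We are left with at most $3\cdot3$ choices for $(u_0',u_2')$, together with $m'\in\lambda R$ (adjustable by multiples of $3$). Each of these is a scalar triple $(u_0'+m'\Sigma,1,u_2')$. The trivial units $\omega^k f^l$ account for exactly three such triples up to the $\Sigma$ ambiguity: $f$ acts as $(1,\omega,\omega^2)$ on $(\eta_0,\eta_1,\eta_2)$, and after normalizing the $\eta_1$-coordinate to $1$ the triples $(\omega^{-j},1,\omega^{j})$ for $j=0,1,2$ remain. The crux is to show that the remaining six triples violate the $\lambda^2$ (that is, mod $3$) integrality condition. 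This is a direct computation like the one in the proof that $\mathbb Z[\omega]C_{3_t}$ has only trivial units, where $\omega-\omega^2$ was shown not to be divisible by $3$. Concretely, I would write $x_1=\frac13(a_0+\omega^{2}a_1+\omega a_2)$ and check, for each pair, whether some $m'$ makes every coefficient integral. The coefficient at $g\neq1$ involves only $m'$, while the coefficient at $1_G$ involves $u_0',u_2'$. Together these should pin down $u_0'u_2'=1$ and $m'\equiv0$. The delicate point is tracking the $m'\Sigma$ freedom correctly; again $|G|>2$ should separate the identity coefficient from the others. Once only the three triples above survive, every unit equals $u_1g_1\cdot\omega^k f^l$ up to sign, so it is trivial.
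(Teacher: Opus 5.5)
Your proposal is correct and follows the paper's route: you factor out the trivial unit coming from one of the nontrivial-character components (the paper uses the $\eta_2$-component, you use the $\eta_1$-component), then impose integrality modulo $3\mathbb Z[\omega]$ to see that only the three normalized triples $(\omega^{-j},1,\omega^{j})$ survive, and these are the images of the trivial units $\omega^{-j}f^{j}$. Your $\lambda$-adic organization, with $\lambda=1-\omega$, proceeds in two stages. First, $a_0\equiv a_1\equiv a_2 \pmod{\lambda}$ forces $g_0'=g_2'=1$ (using $|G|>2$) and forces the units to be cube roots of unity. Second, the mod-$3$ condition at the identity coefficient forces $u_0'u_2'=1$. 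This just makes explicit the computation the paper summarizes as ``when solving it, we see there are only $3$ possibilities.''
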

\begin{proof} Similarly, denote $C_3=\langle f\rangle$, and any elements of $G$ are $g$. $$N=\mathbb Z[\omega] G_t \otimes_{\mathbb Z[\omega]}  \mathbb Z[\omega]\eta_0 \oplus \mathbb Z[\omega] G \otimes_{\mathbb Z[\omega]}  \mathbb Z[\omega]\eta_1\oplus \mathbb Z[\omega]G \otimes_{\mathbb Z[\omega]}  \mathbb Z[\omega]\eta_2.$$ $U(N)=(u_1g_1+s\sum)\otimes \eta_0+u_2g_2 \otimes \eta_1+u_3g_3\otimes \eta_2= u_3g_3((u'_1g'_1+s'\sum)\otimes \eta_0+u'_2g'_2 \otimes \eta_1+1\otimes \eta_2)$. 
We check the condition for this element to have a pre-image in $\mathbb Z[\omega](G\times C_{3})_{_t}$.  $u_3g_3$ is a trivial unit, while $$(u'_1g'_1+s'\sum)(\frac{1+f+f^2}{3})+u'_2g'_2(\frac{1+\omega f+\omega^2 f^2}{3})+(\frac{1+\omega^2f+\omega f^2}{3})$$ corresponds to, $$\frac{u'_1g'_1+s'\sum+u'_2g'_2+1}{3}1+\frac{u'_1g'_1+s'\sum+\omega u'_2g'_2+\omega^2}{3}f+\frac{u'_1g'_1+s'\sum+\omega^2 u'_2g'_2+\omega}{3}f^2.$$ 


That means we need to solve the following equations: 
\begin{align*}
u'_1g'_1+s'\sum+u'_2g'_2+1&=0 \mod 3\mathbb Z[\omega] \\
u'_1g'_1+s'\sum+\omega u'_2g'_2+\omega^2&=0  \mod 3\mathbb Z[\omega]  \\
u'_1g'_1+s'\sum+\omega ^2u'_2g'_2+\omega&= 0  \mod 3\mathbb Z[\omega]  
\end{align*}

When solving it, we see there are only $3$ possibilities. $g'_1=g'_2=1_G, u'_1=u'_2=1$ or $g'_1=g'_2=1_G, u'_1=\omega^2, u'_2=\omega$ or  $g'_1=g'_2=1_G, u'_1=\omega, u'_2=\omega^2$. With the trivial unit, $u_3g_3$ we have a total of $18|G|$ choices of units, that is also the count for the amount of trivial units. 
\end{proof} 
\begin{prop}
            Let $|G|>2, R=\mathbb Z[i]$.  If $RG$ and $RG_t$ has only trivial units then $R(G\times C_3)_t$ has only trivial units. 
\end{prop}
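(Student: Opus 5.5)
The plan is to follow the proof of the preceding proposition for $R=\mathbb Z[\omega]$, with one change. Over $\mathbb Q(i)$ the polynomial $1+f+f^2$ does not split, so $KC_3$ decomposes only as $K\eta_0\oplus K\eta'$, where $\eta'=1-\eta_0$ and $KC_3\eta'\cong K(\omega)=\mathbb Q(i,\omega)=\mathbb Q(\zeta_{12})$. The map $\pi\theta$ therefore becomes
\[
K(G\times C_3)_t\cong KG_t\otimes_K K\eta_0\oplus KG\otimes_K K(\omega).
\]
I would take the order
\[
N=\mathbb Z[i]G_t\otimes\mathbb Z[i]\eta_0\oplus \mathbb Z[i]G\otimes_{\mathbb Z[i]}\mathbb Z[\zeta_{12}].
\]
Next I would write a unit of $N$ as a pair. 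Its first coordinate is $(ug+m\Sigma)\otimes\eta_0$, which is trivial by the hypothesis on $RG_t$. Its second coordinate is a unit of $\mathbb Z[\zeta_{12}]G$. Factoring out a trivial unit reduces the problem to elements whose second coordinate is normalised. I would then pull such an element back through $\eta_0=\frac{1+f+f^2}{3}$ and $\eta'=\frac{2-f-f^2}{3}$. This yields congruences modulo $3\mathbb Z[i]$ (note that $3$ is inert in $\mathbb Z[i]$) on the coefficients of $1,f,f^2$, analogous to the three congruences in the $\mathbb Z[\omega]$ case. Finally I would count the solutions and compare the count with the number $4\cdot 3|G|$ of trivial units.

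The main obstacle is the second coordinate. In the $\mathbb Z[\omega]$ proof the units of the non-principal components were trivial units $u_jg_j$ by the hypothesis on $RG$. Here the component is $\mathbb Z[\zeta_{12}]G$, and the hypothesis that $\mathbb Z[i]G$ has only trivial units says nothing about it. Worse, a rank count suggests the argument cannot close. By Herman and Li the hypothesis forces $G\cong C_4^m\times C_2^n$. Then $\mathbb Q(\zeta_{12})G$ is a sum of copies of $\mathbb Q(\zeta_{12})$, and each copy has unit rank $2-1=1$. Hence $rk\,U(R(G\times C_3)_t)\ge 1$, while $rk\,TU=rk\,U(\mathbb Z[i])=0$.

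So the first thing I would do is test this concretely for $G=C_4$, checking whether a power of a fundamental unit of $\mathbb Z[\zeta_{12}]$, embedded in the $\eta'$ component, lies in $\mathbb Z[i](C_4\times C_3)_t$. Since the ranks differ, a suitable power must lie in the order, so I expect this test to produce a nontrivial unit. In that case the plan above cannot succeed. The statement would need to be corrected, for example by excluding $R=\mathbb Z[i]$ with a $C_3$ factor, consistent with Theorem 3(3), which via Herman and Li allows only $C_3$ with $\mathbb Z[\omega]$. If the congruence computation somehow excluded all nontrivial units, the counting argument above would give the proof. However, the rank comparison indicates it will not.
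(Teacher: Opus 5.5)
Your conclusion is correct: the statement as printed, with $C_3$, is false. Your rank count is already a complete disproof, so you can drop the hedging and the concrete test with a fundamental unit of $\mathbb Z[\zeta_{12}]$. Two facts close the argument.

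First, the statement is not vacuous. $G=C_4$ satisfies both hypotheses, by Herman--Li and by the paper's proposition that $\mathbb Z[i]C_{4_t}$ has only trivial units.

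Second, an order and a maximal order containing it have unit groups of the same rank. The paper uses this at the start of Section 2.

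Since $\omega\notin\mathbb Q(i)$ and $G$ has exponent dividing $4$, you get
$\mathbb Q(i)(G\times C_3)_t\cong\mathbb Q(i)^{|G|-1}\oplus\mathbb Q(\zeta_{12})^{|G|}$.
So $U(\mathbb Z[i](G\times C_3)_t)$ has rank $|G|$. The $12|G|$ trivial units, by contrast, form a finite group. This also matches the paper's main theorem, whose part (3) allows only $C_4^m\times C_2^n$ when $R=\mathbb Z[i]$.

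The discrepancy comes from a typo in the paper. Its proof of this proposition opens with ``denote $C_4=\langle f\rangle$''. It then uses four idempotents and four congruences modulo $4\mathbb Z[i]$, and concludes that $\mathbb Z[i](G\times C_4)_t$ has only trivial units. The intended statement therefore has $C_4$ in place of $C_3$, which is what is needed to reach the family $C_4^m\times C_2^n$.

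For that version, the obstacle you identified disappears. Since $i\in\mathbb Q(i)$, the algebra $\mathbb Q(i)C_4$ splits into four copies of $\mathbb Q(i)$. Every non-principal component of $N$ is then $\mathbb Z[i]G$, whose units are trivial by hypothesis. Your outline (factor off a trivial unit, pull back through the idempotents, solve the congruences, compare with the $16|G|$ trivial units) is then exactly the paper's argument. So the right repair is to replace $C_3$ by $C_4$, rather than merely to exclude the $C_3$ factor.
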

\begin{proof} Finally, denote $C_4=\langle f\rangle$, and any elements of $G$ as $g$ 
$$N=\mathbb Z[i] G_t \otimes_{\mathbb Z[i]}  \mathbb Z[i]\eta_0 \oplus \mathbb Z[i] G \otimes_{\mathbb Z[i]}  \mathbb Z[i]\eta_1\oplus \mathbb Z[i]G \otimes_{\mathbb Z[i]}  \mathbb Z[i]\eta_2 \oplus \mathbb Z[i]G \otimes_{\mathbb Z[i]}  \mathbb Z[i]\eta_3.$$ 
$U(N)=(u_1g_1+s\sum)\otimes \eta_0+u_2g_2 \otimes \eta_1+u_3g_3\otimes \eta_3+u_4g_4\otimes \eta_3= u_4g_4((u'_1g'_1+s'\sum)\otimes \eta_0+u'_2g'_2 \otimes \eta_1+u'_3g'_3\otimes \eta_2+1\otimes \eta_3)$.
We again check the conditions for this element to have a pre-image in $\mathbb Z[i](G\times C_{3})_{_t}$.
Which means we need to solve the system of equations: 
\begin{align*}
u'_1g'_1+s'\sum+u'_2g'_2+u'_3g'_3+1&=0 \mod 4\mathbb Z[i] \\
u'_1g'_1+s'\sum-u'_2g'_2+iu'_3g_3-i&=0  \mod 4\mathbb Z[i]  \\
u'_1g'_1+s'\sum+ u'_2g'_2-u'_3g_3-1&= 0  \mod 4\mathbb Z[i]  \\
u'_1g'_1+s'\sum-u'_2g'_2-iu'_3g'_3+i&= 0  \mod 4\mathbb Z[i]
\end{align*}

When solved, we only have $4$ possible options. That with  $u_4g_4$ results in $16|G|$ units that can be in $\mathbb Z[i](G\times C_{4})_{_t}$. This counts already for all trivial units, therefore there can only be trivial units in $\mathbb Z[i](G\times C_{4})_{_t}$. 
\end{proof}
\begin{remark}
    Rich's theorem is now also proven as a corollary because of the $R\hookrightarrow \mathbb Z$ embedding.
\end{remark}

Theorem $4$ parts $(1)$ and $(3)$ are now proved. 
\section{Units of $RC_3$}
We now consider the case when $K$ is a totally real field and $G=C_3=\langle g\rangle$. \\ For any number field $L$ denote $\mu_L$ as the subgroup of roots of unity in $\mathcal{O}_L$. 

Recall, $KC_3=K\eta_0\oplus KC_3\eta_1$. So $KC_{3_t}=KC_3\eta_1\cong K(\omega)$ via $g\mapsto \omega$. We denote $K(\omega)=L$. Then $M_{KC_{3_t}}\cong \mathcal{O}_{L}$. As $RG_t\subseteq  M_{KC_{3_t}} $ the map $g\mapsto\omega$ extends to an injective map from $RG_t$ to $\mathcal{O}_{L}$. We have established the additive structure of $\mathcal{O}_KC_{3_t}$. It correspond with $\mathcal{O}_K[\omega]\subseteq\mathcal{O}_{L}$. We examine its units structure. It must correspond with a subgroup of  $U(\mathcal{O}_{L})$. 

To study the units in $\mathcal{O}_L$, we use the famous Hasse Index, taken from \cite{frohlich1991algebraic}:
\begin{theorem}
       For a totally real field $K$. The Hasse index, $$[U(\mathcal{O}_{L}):U(\mathcal{O}_K) \mu_{L}]=\delta$$ can only be one or two. Moreover, If $\delta=2$ then there exist $\gamma\in\mathcal{O}_{L}$ such that $\gamma^2\in \mathcal{O}_K^\times \mu_{L}.$ ($\gamma\not\in \mathcal{O}_K^\times\mu_L)$. So we can write, $\gamma^2=\epsilon v$, for $\epsilon\in\mathcal{O}_K^\times, v \in \mu_{L}$. If $\epsilon$ is positive under a real embedding of $K$ then $v\not\in\mu_{L}^2$. 
\end{theorem}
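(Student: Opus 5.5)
The plan is to treat $L=K(\omega)$ as a CM extension of the totally real field $K$, i.e.\ $[L:K]=2$ with $L$ totally imaginary, and to use complex conjugation $\iota$, which is the nontrivial element of $\mathrm{Gal}(L/K)$. First I will consider the map $\phi:U(\mathcal O_L)\to \mu_L$ given by $\phi(\alpha)=\alpha/\iota(\alpha)$. Under every complex embedding $|\alpha|=|\iota(\alpha)|$, so $\phi(\alpha)$ has absolute value $1$ at every archimedean place. It is also an algebraic integer, so by Kronecker's theorem it is a root of unity in $L$. The map $\phi$ is a homomorphism, and $\phi(\epsilon v)=v^2$ for $\epsilon\in U(\mathcal O_K)$ and $v\in\mu_L$. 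Hence $\phi$ induces an injection
\[
U(\mathcal O_L)/U(\mathcal O_K)\mu_L \hookrightarrow \mu_L/\mu_L^2 .
\]
To check injectivity, suppose $\phi(\alpha)=v^2$. Then $\alpha v^{-1}$ is fixed by $\iota$, so it is a unit of $\mathcal O_K$. Since $\mu_L$ is a finite cyclic group of even order (it contains $-1$), $\mu_L/\mu_L^2$ has order $2$. Therefore $\delta\in\{1,2\}$.

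For the second claim, suppose $\delta=2$ and choose $\gamma\in U(\mathcal O_L)$ with $\gamma\notin U(\mathcal O_K)\mu_L$. Then $\gamma^2$ lies in the trivial coset, so $\gamma^2=\epsilon v$ with $\epsilon\in\mathcal O_K^\times$ and $v\in\mu_L$.

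It remains to show that $v\notin\mu_L^2$ when $\epsilon$ is positive under some real embedding of $K$. Suppose instead that $v=w^2$. Then $(\gamma w^{-1})^2=\epsilon\in K$. Put $\beta=\gamma w^{-1}$. Then $\beta\notin K$, since otherwise $\gamma\in U(\mathcal O_K)\mu_L$. Because $\beta^2\in K$, we get $\iota(\beta)=\pm\beta$, and the sign $+$ is excluded, so $\iota(\beta)=-\beta$. Then $\beta\bar\beta=-\beta^2=-\epsilon$. Now fix a real embedding $\sigma$ of $K$ with $\sigma(\epsilon)>0$ and extend it to a complex embedding of $L$. Under this embedding $\beta\bar\beta=|\beta|^2>0$, while $-\sigma(\epsilon)<0$. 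This is a contradiction, so $v\notin\mu_L^2$.

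The only delicate point is the Kronecker step and the fact that $\iota$ commutes with every complex embedding. The latter is exactly the CM property of $L=K(\omega)$, which holds because $K$ is totally real, so I expect no real obstacle there.
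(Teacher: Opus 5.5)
Your proof is correct and complete. The paper gives no argument for this statement; it quotes it from Fr\"ohlich--Taylor as a black box, so there is no internal proof to compare against. What you have written is the standard textbook proof of Hasse's unit index theorem for the CM extension $L=K(\omega)/K$. It uses the CM identity $\tau\circ\iota=\overline{\tau}$ for every embedding $\tau$, and the homomorphism $\phi(\alpha)=\alpha/\iota(\alpha)$, which lands in $\mu_L$ by Kronecker. The preimage of $\mu_L^2$ is exactly $U(\mathcal{O}_K)\mu_L$, and $|\mu_L/\mu_L^2|=2$ gives $\delta\le 2$. For the last clause, your sign argument via $\beta\iota(\beta)=|\tau(\beta)|^2>0$ is right, and it holds for any choice of the decomposition $\gamma^2=\epsilon v$.

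The citation buys brevity. Your route buys self-containment and slightly more information. Applying $\iota$ to $\gamma^2=\epsilon v$ gives $(\gamma\iota(\gamma))^2=\epsilon^2$, where $\gamma\iota(\gamma)\in K$ is totally positive. Hence $\epsilon$ is totally positive or totally negative. If $\epsilon$ is positive at one real place, then $\gamma\iota(\gamma)=\epsilon$ and $v=\phi(\gamma)$, which represents the nontrivial class of $\mu_L/\mu_L^2$. This is exactly the normalization $u\bar u=\epsilon$ that the paper uses in the Hasse-index-$2$ case of its proposition on units of $\mathcal{O}_K[\omega]$. There the paper deduces it from $(u\bar u)^2=\epsilon^2$ without addressing the sign, and your argument supplies the missing justification.
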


We would also rely on common discriminant results, taken from \cite{washington2012introduction}:
\begin{theorem}
    If a natural number, $m$ has at least two distinct prime factors, then $1-\zeta_m$ is a unit. Where  $\zeta_m$ is the $m$-th root of unity.
\end{theorem}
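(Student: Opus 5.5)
The statement to prove is the standard fact that if $m$ has at least two distinct prime factors, then $1-\zeta_m$ is a unit. The plan is to evaluate the cyclotomic polynomial at $1$. The key identity is
\[
\prod_{k\in(\mathbb Z/m\mathbb Z)^\times}(1-\zeta_m^k)=\Phi_m(1).
\]
It holds because $\Phi_m(x)=\prod_{k}(x-\zeta_m^k)$ over the primitive $m$-th roots of unity. Since each factor $1-\zeta_m^k$ is an algebraic integer in $\mathbb Z[\zeta_m]$, it suffices to show $\Phi_m(1)=\pm1$. The factor with $k=1$ then divides $\pm1$ in $\mathbb Z[\zeta_m]$, so $1-\zeta_m$ is a unit.

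To compute $\Phi_m(1)$, start from $x^m-1=\prod_{d\mid m}\Phi_d(x)$. Divide by $x-1=\Phi_1(x)$ to get
\[
1+x+\cdots+x^{m-1}=\prod_{d\mid m,\,d>1}\Phi_d(x),
\]
and evaluate at $x=1$, which gives $m=\prod_{d\mid m,\,d>1}\Phi_d(1)$. Next, for a prime power $p^a$ with $a\ge1$, we have $\Phi_{p^a}(x)=1+x^{p^{a-1}}+\cdots+x^{(p-1)p^{a-1}}$, so $\Phi_{p^a}(1)=p$. Now write $m=\prod_{i}p_i^{a_i}$. The prime-power divisors $d>1$ of $m$ alone contribute $\prod_i p_i^{a_i}=m$. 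Every $\Phi_d(1)$ is a nonzero integer, because $\Phi_d\in\mathbb Z[x]$ and $1$ is not a primitive $d$-th root of unity for $d>1$. Hence the product of $\Phi_d(1)$ over the divisors $d>1$ that are \emph{not} prime powers equals $1$.

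Each such $\Phi_d(1)$ is a nonzero integer, so it must be $\pm1$. We could argue by strong induction on $m$ to see that every such value is $1$, but $\pm1$ already suffices. The hypothesis that $m$ has two distinct prime factors means $m$ itself is not a prime power, so $\Phi_m(1)=\pm1$. This finishes the argument.

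An alternative route uses norms: $N_{\mathbb Q(\zeta_m)/\mathbb Q}(1-\zeta_m)=\Phi_m(1)$, and an algebraic integer with norm $\pm1$ is a unit. Both routes reduce to the same computation of $\Phi_m(1)$.

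The only delicate point, and the main obstacle, is the bookkeeping in the product formula. One has to separate the prime-power divisors cleanly from the others and check that every factor is a nonzero integer, so that a product equal to $1$ forces each factor to be $\pm1$.
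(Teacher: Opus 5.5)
Your proof is correct and complete. The paper gives no argument of its own for this fact and simply quotes it from Washington; your computation is the standard argument found in that reference, where it is phrased as grouping the factors of $m=\prod_{j=1}^{m-1}(1-\zeta_m^j)$ by the order of $\zeta_m^j$: evaluate $1+x+\cdots+x^{m-1}=\prod_{d\mid m,\,d>1}\Phi_d(x)$ at $x=1$, cancel the prime-power contributions $\Phi_{p^a}(1)=p$ to get $\Phi_m(1)=\pm1$, and conclude that $1-\zeta_m$ divides $\pm1$ in $\mathbb Z[\zeta_m]$. You are also right to read $\zeta_m$ as a \emph{primitive} $m$-th root of unity, which the statement leaves implicit but needs.
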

\begin{theorem}
    The absolute discriminant of $\mathbb Q(\zeta_{p^n})$ is $\pm p^k$ where $k=p^{n-1}(pn-n-1)$. 
\end{theorem}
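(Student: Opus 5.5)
The plan is to compute the discriminant of the power basis $1,\zeta,\dots,\zeta^{\varphi(p^n)-1}$, where $\zeta=\zeta_{p^n}$. We use the standard fact that $\mathcal{O}_{\mathbb Q(\zeta)}=\mathbb Z[\zeta]$, so this discriminant is the absolute discriminant. For a monic irreducible $f$ of degree $d$ with root $\zeta$, we have $\operatorname{disc}(\mathbb Z[\zeta])=(-1)^{d(d-1)/2}N_{\mathbb Q(\zeta)/\mathbb Q}(f'(\zeta))$. The sign is irrelevant to the statement, so it suffices to show that $|N(\Phi_{p^n}'(\zeta))|=p^k$. Here $d=\varphi(p^n)=p^{n-1}(p-1)$ and $f=\Phi_{p^n}$.

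First I compute $\Phi'_{p^n}(\zeta)$ without expanding the polynomial. Start from the identity $(x^{p^{n-1}}-1)\Phi_{p^n}(x)=x^{p^n}-1$ and differentiate. Substituting $x=\zeta$ kills the term containing $\Phi_{p^n}(\zeta)$, which leaves
\[
(\zeta^{p^{n-1}}-1)\,\Phi_{p^n}'(\zeta)=p^n\zeta^{p^n-1}.
\]
Set $\xi=\zeta^{p^{n-1}}$, which is a primitive $p$-th root of unity. Then $\Phi'_{p^n}(\zeta)=p^n\zeta^{-1}/(\xi-1)$.

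Next I take norms factor by factor. We have $N(p^n)=p^{nd}=p^{np^{n-1}(p-1)}$, and $N(\zeta^{-1})=\pm1$ because $\zeta$ is a root of unity. For $N(\xi-1)$, I use transitivity of the norm through $\mathbb Q(\xi)=\mathbb Q(\zeta_p)$, where $[\mathbb Q(\zeta):\mathbb Q(\xi)]=p^{n-1}$. This gives $N_{\mathbb Q(\zeta)/\mathbb Q}(\xi-1)=N_{\mathbb Q(\xi)/\mathbb Q}(\xi-1)^{p^{n-1}}$. The inner norm is $\pm\Phi_p(1)=\pm p$, since $\prod_{j=1}^{p-1}(1-\xi^j)=\Phi_p(1)=p$. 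Hence $|N(\xi-1)|=p^{p^{n-1}}$. Combining the three factors,
\[
|\operatorname{disc}|=p^{np^{n-1}(p-1)-p^{n-1}}=p^{p^{n-1}(pn-n-1)},
\]
which is the claimed exponent $k$.

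The main obstacle is the step I am assuming, namely that $\mathbb Z[\zeta]$ is the full ring of integers. If that has to be proved rather than cited, I would argue as follows. The element $\pi=1-\zeta$ satisfies the polynomial $\Phi_{p^n}(1-x)$, which is Eisenstein at $p$. Therefore $p$ is totally ramified with $(p)=(\pi)^d$. Since the discriminant just computed is a power of $p$, the index $[\mathcal{O}:\mathbb Z[\zeta]]$ is a power of $p$. The Eisenstein property then shows that $p$ does not divide this index, by the standard argument that an element $\sum a_i\pi^i/p$ lying in $\mathcal{O}$ forces every $a_i$ to be divisible by $p$. This identifies the computed discriminant with the absolute one.
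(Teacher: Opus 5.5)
Your proof is correct. The key computations all check out: $\Phi_{p^n}'(\zeta)=p^n\zeta^{-1}/(\zeta^{p^{n-1}}-1)$, the norm $|N(\zeta^{p^{n-1}}-1)|=p^{p^{n-1}}$ obtained by passing through $\mathbb Q(\zeta_p)$, and the Eisenstein argument at $1-\zeta$ showing $\mathbb Z[\zeta]=\mathcal O_{\mathbb Q(\zeta)}$; together they give $|\mathrm{disc}|=p^{p^{n-1}(pn-n-1)}$. The paper proves nothing here and simply quotes the result from Washington, and your route is essentially the standard proof found in that cited source.
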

\begin{theorem}
    Given two $\mathcal{O}_K$-lattices $M,N$, the relative discriminants of $K$ over a field extension $L$ have the relations $$disc(N)=disc(M)[M:N]^2.$$ Where the index $[M:N]$ is the determinant ideal of the change of basis matrix between the two lattices. Moreover, if $M\subseteq N$ then $[M:N]\in \mathcal{O}_K$. 
\end{theorem}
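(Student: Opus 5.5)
We reduce to the case $M\subseteq N$ and compare both discriminants against a common basis through a change-of-basis matrix. Let $n=[L:K]$ and let $M,N$ be full $\mathcal{O}_K$-lattices in $L$, taking them free for the moment (e.g. after localizing at each prime of $\mathcal{O}_K$, where every lattice is free). Fix $\mathcal{O}_K$-bases $m_1,\dots,m_n$ of $M$ and $x_1,\dots,x_n$ of $N$. Write $m_i=\sum_j a_{ij}x_j$ with $A=(a_{ij})\in GL_n(K)$. The discriminant of a basis is $\det(\mathrm{Tr}_{L/K}(m_im_j))$, and the index $[N:M]$ is the ideal generated by $\det A$ (in the paper's notation $[M:N]$, the determinant of the change-of-basis matrix).

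The first step is the bilinear computation. We have
\[
\mathrm{Tr}(m_im_j)=\sum_{k,l}a_{ik}a_{jl}\mathrm{Tr}(x_kx_l),
\]
so the trace matrix of $M$ equals $A\,T_N\,A^{T}$. Taking determinants gives $disc(M)=(\det A)^2 disc(N)$. This is the stated relation, with the roles of $M$ and $N$ placed according to which lattice is expressed in terms of the other. Under another choice of bases, $\det A$ changes only by units of $\mathcal{O}_K$, so the relation holds at the level of ideals.

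The second step is integrality. If $M\subseteq N$, each $m_i$ is an $\mathcal{O}_K$-combination of the $x_j$, so $A$ has entries in $\mathcal{O}_K$ and $\det A\in\mathcal{O}_K$. This shows that the index lies in $\mathcal{O}_K$, i.e. it is an integral ideal.

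The main obstacle is non-free lattices, since $\mathcal{O}_K$ need not be a PID. Here we define the discriminant and the index as ideals generated by $\det(\mathrm{Tr}(y_iy_j))$ over all $n$-tuples $y_i$ from the lattice. Alternatively, we localize at each prime $\mathfrak p$: there $\mathcal{O}_{K,\mathfrak p}$ is a DVR, so the lattices become free. The free-case identity then holds locally at every $\mathfrak p$, and an ideal is determined by its localizations, which gives the global identity. In the application $\mathcal{O}_K[\omega]\subseteq\mathcal{O}_L$ the order $\mathcal{O}_K[\omega]$ is free with basis $1,\omega$, so this subtlety matters only for $\mathcal{O}_L$.
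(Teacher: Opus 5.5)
The paper gives no proof of this statement. It quotes it as a standard fact from \cite{washington2012introduction}, so there is no argument of the authors' to compare with. Your proof is the standard textbook one and is correct:
from $m_i=\sum_j a_{ij}x_j$ you get $T_M=A\,T_N\,A^{T}$, hence $disc(M)=(\det A)^2\,disc(N)$. The ideal $(\det A)$ is independent of the chosen bases up to units, $M\subseteq N$ forces $A$ to have entries in $\mathcal{O}_K$, and the non-free case reduces to the free one by localizing at primes of $\mathcal{O}_K$.

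What you should not gloss over is the direction. Your computation shows that when $M\subseteq N$ it is the smaller lattice whose discriminant acquires the square of an integral ideal: $disc(M)=disc(N)(\det A)^2$ with $\det A\in\mathcal{O}_K$. The statement as printed couples $disc(N)=disc(M)[M:N]^2$ with ``if $M\subseteq N$ then $[M:N]\in\mathcal{O}_K$''. For $M\subsetneq N$ these cannot both hold. In that case $disc(N)$ properly divides $disc(M)$, so the printed formula forces $[M:N]^2=disc(N)\,disc(M)^{-1}$, which is not integral.

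The intended hypothesis is $N\subseteq M$. This is exactly how the paper uses the result: $N=\mathcal{O}_K[u]\subseteq M=\mathcal{O}_K[\omega]$, giving $3\mid(u-\bar u)^2$. So instead of asserting that your identity ``is the stated relation, with the roles of $M$ and $N$ placed according to which lattice is expressed in terms of the other'', say explicitly that you are proving this corrected version.

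Two smaller points. First, in your non-free paragraph the index is not defined by trace determinants. Define it as the module index, the ideal whose localization at $\mathfrak p$ is $(\det A_{\mathfrak p})$ (a unit for almost all $\mathfrak p$), and note that the discriminant ideal commutes with localization. Second, in the paper's application both lattices, $\mathcal{O}_K[u]$ and $\mathcal{O}_K[\omega]$, are free, with bases $1,u$ and $1,\omega$. Your free-case argument therefore already covers it, and $\mathcal{O}_L$ never appears as one of the two lattices.
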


We know the trivial units in $RG_t$ will correspond to $U(\mathcal{O}_K) \langle\omega\rangle$ because $g\mapsto \omega$. That means if we want to prove $RG_t$ has only trivial units, we need to prove the following proposition:
\begin{prop}
    Let $u$ be a unit in $\mathcal{O}_{L}$. $u\in \mathcal{O}_K[\omega]$ if and only if $u\in U(\mathcal{O}_K) \langle\omega\rangle$. 
\end{prop}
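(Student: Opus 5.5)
The "if" direction is immediate: $U(\mathcal{O}_K)\subseteq \mathcal{O}_K[\omega]$ and $\omega\in\mathcal{O}_K[\omega]$. For the converse, my plan avoids the Hasse index theorem and works directly with the basis $\{1,\omega\}$ of $\mathcal{O}_K[\omega]$ over $\mathcal{O}_K$. Since $K$ is totally real, $L=K(\omega)$ is a CM field. Complex conjugation (call it $\bar{\ }$) generates $\mathrm{Gal}(L/K)$ and sends $\omega\mapsto\omega^2$, so it preserves $\mathcal{O}_K[\omega]$.

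\emph{Step 1 (reduce to roots of unity).} Let $u\in\mathcal{O}_K[\omega]$ be a unit of $\mathcal{O}_L$, and put $\varepsilon=u\bar u=N_{L/K}(u)\in U(\mathcal{O}_K)$. Then $\zeta:=u/\bar u=u^2\varepsilon^{-1}$ lies in $\mathcal{O}_K[\omega]$. Every archimedean absolute value of $\zeta$ equals $1$, because conjugation commutes with all embeddings of a CM field. By Kronecker's theorem $\zeta$ is therefore a root of unity, so $\zeta\in\mu_L\cap\mathcal{O}_K[\omega]$.

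\emph{Step 2 (main obstacle: $\mu_L\cap\mathcal{O}_K[\omega]=\langle -\omega\rangle$).} Let $\zeta\in\mathcal{O}_K[\omega]$ have order $m$, and suppose $\zeta\notin K$, since otherwise $\zeta=\pm1$. Then $\{1,\zeta\}$ is a $K$-basis of $L$ with $\mathcal{O}_K[\zeta]\subseteq\mathcal{O}_K[\omega]$. The discriminant relation stated earlier gives
\[(\zeta-\bar\zeta)^2=\mathrm{disc}(1,\zeta)=\mathrm{disc}(1,\omega)\,[\mathcal{O}_K[\omega]:\mathcal{O}_K[\zeta]]^2=-3\cdot(\text{element of }\mathcal{O}_K)^2,\]
so $3\mid(\zeta-\zeta^{-1})^2=\zeta^{-2}(\zeta^2-1)^2$. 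I then split into cases.
\begin{itemize}
\item If the order of $\zeta^2$ has two distinct prime factors, then $1-\zeta^2$ is a unit by the cited result. Hence $3$ would be a unit, which is impossible.
\item If $\zeta^2$ has prime power order $p^k$ with $p\neq3$, then $1-\zeta^2$ divides a power of $p$, which is coprime to $3$. Again this is a contradiction.
\item If $\zeta^2$ has order $3^k$ with $k\ge2$, work in $\mathbb{Q}(\zeta_{3^k})$, where $3=\pi^{2\cdot3^{k-1}}\cdot$unit and $1-\zeta^2$ has $\pi$-valuation $1$. Then $v_\pi((\zeta^2-1)^2)=2<2\cdot3^{k-1}=v_\pi(3)$, a contradiction.
\end{itemize}
Hence $\zeta^2$ has order dividing $3$, so $\zeta\in\{\pm1,\pm\omega,\pm\omega^2\}$. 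I expect this case analysis to be the delicate part, especially the $3$-power case and making sure the divisibility from the discriminant identity is used in the right ring.

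\emph{Step 3 (finish).} From Step 2 we have $u^2=\pm\varepsilon\omega^k$. Since $\omega^k=\omega^{2j}$ for a suitable $j$, the element $u'=u\omega^{-j}\in\mathcal{O}_K[\omega]$ satisfies $u'^2=\pm\varepsilon\in U(\mathcal{O}_K)$. Write $u'=a+b\omega$ with $a,b\in\mathcal{O}_K$. Using $\omega^2=-1-\omega$,
\[u'^2=(a^2-b^2)+(2ab-b^2)\omega,\]
so $b(2a-b)=0$. If $b=0$, then $u'=a\in U(\mathcal{O}_K)$ and $u=a\omega^j$, as required. If $b=2a$, then $u'=a(1+2\omega)$ and $u'^2=-3a^2$ would be a unit, forcing $3\in U(\mathcal{O}_K)$, which is impossible. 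Therefore $u\in U(\mathcal{O}_K)\langle\omega\rangle$, which proves the proposition.
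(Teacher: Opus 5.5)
Your proof is correct, and it takes a genuinely different route from the paper.

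\textbf{The paper's route.} The paper applies the discriminant comparison to $u$ itself to get $3\mid(u-\bar u)^2$. It then splits according to the Hasse unit index $\delta=[U(\mathcal{O}_L):U(\mathcal{O}_K)\mu_L]\in\{1,2\}$, which it quotes as a black box. When $\delta=1$, it divides off a real unit and runs the cyclotomic case analysis on $u^{-2}$. When $\delta=2$, it writes $u^2=\epsilon\zeta$ with $\zeta\notin\mu_L^2$, so that $(u-\bar u)^2\mathcal{O}_L=(1-\zeta^{-1})^2\mathcal{O}_L$. It then uses $\zeta\notin\mu_L^2$, hence $\zeta\notin\mu_3$, to reach a contradiction.

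\textbf{Your route.} You use the CM structure and Kronecker's theorem to see that $u/\bar u$ is a root of unity lying in the order $\mathcal{O}_K[\omega]$; this is exactly the mechanism behind Hasse's bound $\delta\le 2$. You then apply the discriminant comparison to that root of unity rather than to $u$. This gives the clean intermediate lemma $\mu_L\cap\mathcal{O}_K[\omega]=\mu_6$. You return from $u/\bar u\in\mu_6$ to $u$ with the explicit computation $b(2a-b)=0$ in the basis $\{1,\omega\}$. The branch $b=2a$ is impossible because $(1+2\omega)^2=-3$ would make $3$ a unit.

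\textbf{What each buys.} Your argument is self-contained and has no case split on the unit index. You never need the delicate criterion ``$v\notin\mu_L^2$'' from the cited theorem. Your discriminant relation is the exact element identity $(\zeta-\bar\zeta)^2=-3b^2$ for $\zeta=a+b\omega$, so there is no ambiguity about the direction of ideal containments. The paper avoids your final coordinate computation by letting the Hasse-index dichotomy do that work. The price is that it depends on the precise form of that cited result.

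\textbf{Shared part.} Both arguments use the same cyclotomic case analysis: two distinct primes, a prime power $p^k$ with $p\neq 3$, and $3^k$ with $k\ge 2$ via valuations in $\mathbb{Q}(\zeta_{3^k})$. Your handling of it is sound, including transporting the divisibility from $\mathcal{O}_L$ down to $\mathbb{Z}[\zeta^2]$ by integrality.
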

\begin{proof}
 Clearly, if $u\in U(\mathcal{O}_K) \langle\omega\rangle$ then it is also in $\mathcal{O}_K[\omega]\subseteq \mathcal{O}_{L}$.

For the other direction, we assume to the contrary,
$u\in\mathcal{O}_K[\omega]$ but not in $U(\mathcal{O}_K)\langle\omega\rangle$
That means $u$ can be written as a linear combination, $a+b\omega$ where $a,b\in\mathcal{O}_K$.
 Consider the $\mathcal{O}_K$-lattices $N=\mathcal{O}_K[u]$, and $M=\mathcal{O}_K[\omega]$. Since $u\in \mathcal{O}_K[\omega]$ it must mean $N\subsetneq M$. We compute the discriminant of $L/K$ of each. $disc(M)=3\mathcal{O}_{L}$, while $disc(N)=(\bar{u}-u)^2\mathcal{O}_{L}=(u-\bar{u})^2\mathcal{O}_{L}$ (as ideals in $\mathcal{O}_{L}$). That means, $((u-\bar{u})^2)\mathcal{O}_{L}\subset 3\mathcal{O}_{L}$, and $3|(u-\bar{u})^2$. 
 
Let $\mu_{L}=\langle\zeta\rangle$, the group of roots of unity in $L$. 
Since $K$ is a totally real field, $\mu_K=\langle-1\rangle$. $\mu_{\mathbb Q(\omega)}\leq \mu_{L}$ so $6$ must divide the order of $\mu_{L}$. That is $\zeta$ is a $6m$ root of unity. 

We now consider the case the Hasse index is $1$. 

In that case $u\in U(\mathcal{O}_K)\mu_L$ then $u=\epsilon v$, which we can always multiply by $\epsilon^{-1}$ and since those will be assigned to the trivial units we care only to see what happens to $v$. That means $u\not\in K$ so $u\in\mu_L$ and therefore we can write $v$ as a root of unity $3m$, $u=v=\pm\zeta^j$ where $1\leq j\leq (3m-1).$ That also means $\bar{u}=u^{-1}$ in that case. Therefore, the ideal $((u-\bar{u})^2)\mathcal{O}_{L}$ can be written as $(u-u^{-1})^2)\mathcal{O}_{L}= u^2(1-u^{-2})^2\mathcal{O}_L=(1-u^{-2})^2\mathcal{O}_L$, ($u$ is a unit). Since $u^{-2}\in\mu_L$, it must be of the order of dividing $|\mu_L|=6h$. If $h=1$, then $\mu_L=\langle-\omega\rangle$. In that case $u$ is in $U(\mathcal{O}_K)\langle-\omega\rangle$, which we know corresponds to trivial units, so we can assume $h>1$. Let $|\mu_L|=2^{n_1}\cdot 3^{n_2}\cdot m$ be with $gdc(6,m)=1$. 

If $u^{-2}$ is of order dividing at least two primes, theorem 6 tells us $1-u^{-2}$ is a unit so the ideal $(u-u^{-2})^2\mathcal{O}_L$ is $\mathcal{O}_L$ and can't be contained in $3\mathcal{O}_L$.

If $u^{-2}$ has an order that divides only $p^n$ with $p\neq 3$. Consider the maximal real field of $\mathbb Q(u)$, $\mathbb Q(u+\bar{u})$. Since $[L:K]=2$ $disc(u)_{L/K}=disc(u)_{\mathbb Q(u)/\mathbb Q(u+\bar{u})}$. The $disc(u)_{\mathbb Q(u)/\mathbb Q(u+\bar{u})}$ is a factor of $disc(u)_{\mathbb Q(u)/\mathbb Q}$ as ideals in $\mathcal{O}_L$. That, with theorem 7, tell us that $3$ cannot divide $(u-u^{-2})^2$.

Finally, when $u^{-2}$ has order $3^{n}$, then the order of $u$ ($u$ and $u^{-1}$ have the same order) could be a power of $3$ or $2$ times a power of $3$. 

If it is of power $3$, since it is in $\mu_L$ it must be a $3^{n}$-root of unity with $n>1$. $3\mathbb Z$ is a prime ideal in $\mathbb Z$. However, it is not a prime in $\mathbb Z[\omega]$ (because $3$ ramify in $\mathbb Z[\omega]$ with ramification index $2$. We also know $(1-\omega)\mathbb Z[\omega]$ is a prime ideal in $\mathbb Z[\omega]$ but not a prime in $\mathbb Z[\zeta_9]$, and it has ramification index $3$ because it is a total ramification extension. So, $(1-\zeta_9)\mathbb Z[\zeta_9]$ is a prime ideal in $\mathbb Z[\zeta_9]$ but not in $\mathbb Z[\zeta_{27}]$ and so on. All of that implies, $3\mathbb Z\subsetneqq (1-\zeta_3)\mathbb Z\subsetneqq (1-\zeta_9)\mathbb Z$. Therefore, we can't have $(1-u^{-2})^2\mathcal{O}_L\subset 3\mathcal{O}_L$ unless $u^{-2}$ has order $3$, which we noted it has order $3^{n}>3$. 

If the order of $u^{-2}$ is $2$ times a power of $3$, the ideal $(1-u^{-2})^2\mathcal{O}_L=((1-u^{-1})(1+u^{-1})^2)\mathcal{O}_L=(1+u^{-1})^2\mathcal{O}_L$. (Because $1-u^{-1}$ is a unit from theorem 6. Now, $(-u^{-1})=u^{-2}$, which is of order $3^n$ so the same ramification argument used for $u^{-1}$ when it was of order $3^n$ to have $3\mathcal{O}_L\subsetneqq (1+u^{-1})^2\mathcal{O}_L$. 


The second case is when the Hasse Index is $2$. 

We assume $u\not\in U(\mathcal{O}_K)\mu_L$. According to theorem 5 that means $u^2=\epsilon \zeta$ with $\epsilon\in U(\mathcal{O}_K), \zeta\in\mu_L, \zeta\not\in\mu_L^2$. Also, $(u\bar{u})^2=u^2\bar{u}^2=\epsilon \zeta \bar{\epsilon}\zeta^{-1}=\epsilon\bar{\epsilon}=|\epsilon|^2=\epsilon^2$. ($\epsilon$ is in $K$, real field). Therefore, $u\bar{u}=\epsilon$ and $\bar{\epsilon}=\epsilon$. In this case, $(u-\bar{u})^2=u^2-2u\bar{u}+\bar{u}^2=\epsilon \zeta -2\epsilon+\epsilon\zeta^{-1}=\epsilon(\zeta-2+\zeta^{-1})=\epsilon(\sqrt{\zeta}-\sqrt{\zeta^{-1}})^2=\epsilon\zeta(1-\zeta^{-1})^2=u^2(1-\zeta^{-1})^{2}$. Since $u^2$ is a unit the ideal $u^2(1-\zeta^{-1})^2\mathcal{O}_L=(1-\zeta^{-1})^2\mathcal{O}_L$ are the same. Therefore, $(u-\bar{u})^2\mathcal{O}_L=(1-\zeta^{-1})^2\mathcal{O}_L$ in the $\delta=2$ case. Moreover, since $\zeta\not\in\mu_L^2$, $\zeta^{-1}$ can not have a power of $3$. That is because, all roots of unity with a power of $3$ are a square in $\mu_L$. (In fact, it cannot have an odd order). If the order of $\zeta^{-1}$ is a composite number, $1-\zeta^{-1}$ is a unit, and the ideal $(1-\zeta^{-1})^2\mathcal{O}_L$ is $\mathcal{O}_L$ and therefore cannot be contain on $3\mathcal{O}_L$. Lastly, if the order of $\zeta^{-1}$ is a power of $2$, then $3$ does not divide the discriminant. 

Overall, we can't have $N\subset M$ so when, $u\in U(\mathcal{O}_L)$ and $u\in\mathcal{O}_K[\omega]$ then $u$ must be in $U(\mathcal{O}_K)\langle-\omega\rangle$. 
\end{proof}
Now that our proposition is proved, together with section $3$ and $2$, we can conclude $RG_t$ can only have trivial units. \\ This completes the proof for theorem 4, our main results.  
\\

In the case of $RC_3$ where $R=\mathcal{O}_K$ and $K$ is a totally real field, we see $RC_{3_t}$ has only trivial units. Therefore, $RC_3$ contains only almost trivial units. That is the only case where a group ring $RG$ contains mom trivial units units, but $RG$ contains only trivial. Specifically, we have shown $$U(RC_3)=\{(u\eta_0+v\eta')g | u=v \mod 3; u,v\in U(R); g\in C_3 \}.$$  \\ This is a full description of units in $RC_3$. A similar description of $U(RC_2)$ is given in the beginning of section 3. 

\printbibliography[heading=subbibliography]
\end{document}